\documentclass[12pt]{amsart}
\usepackage[left=0.8in,right=0.8in,top=1.0in,bottom=1.0in]{geometry}
\usepackage[T1]{fontenc}
\usepackage{lmodern}
\usepackage{amsmath,amssymb,amsthm,mathtools}
\usepackage{microtype}
\usepackage{enumitem}
\usepackage[dvipsnames]{xcolor}
\usepackage{hyperref}
\hypersetup{
    colorlinks=true,
    citecolor=blue,
    linkcolor=purple,
    filecolor=magenta,
    urlcolor=cyan,
    breaklinks=true
    }
\usepackage[nameinlink]{cleveref}
\usepackage{url}
\usepackage{thmtools}
\usepackage{booktabs}

\theoremstyle{plain}
\declaretheorem[name=Theorem]{theorem}
\declaretheorem[name=Corollary, sibling=theorem]{corollary}
\declaretheorem[name=Lemma, sibling=theorem]{lemma}

\declaretheorem[name=Example, sibling=theorem]{example}
\declaretheorem[name=Proposition, sibling=theorem]{proposition}

\newcommand{\ghost}[1]{}
\newcommand{\R}{\mathbb{R}}
\newcommand{\F}{\mathbb{F}}
\newcommand{\N}{\mathbb{N}}

\newcommand{\cG}{\mathcal{G}}
\newcommand{\w}{\mathbf{w}}
\newcommand{\hlam}{\widehat{\lambda}}
\newcommand{\norm}[1]{\left\lVert #1\right\rVert}
\newcommand{\ip}[2]{\left\langle #1,#2\right\rangle}
\renewcommand{\emptyset}{\varnothing}
\DeclareMathOperator{\prank}{partition\text{-}rank}
\DeclareMathOperator{\srank}{slice\text{-}rank}
\DeclareMathOperator{\frank}{frank}

\title[Multiple Distance Ramsey Bounds For Graphs in Euclidean Spaces]{Multiple Distance Ramsey Bounds For Graphs in Euclidean Spaces}
\date{\today}

\author[Ay\c{s}eg\"{u}l Kula]{Ay\c{s}eg\"{u}l Kula}
\address{Pomona College, Claremont, California, United States of America}
\email{akam2024@mymail.pomona.edu}

\author[Mohamed Omar]{Mohamed Omar}
\address{York University, Toronto, Ontario, Canada}
\email{omarmo@yorku.ca}

\author[Jonah Stockwell]{Jonah Stockwell}
\address{Columbia University, New York, New York, United States of America}
\email{js5384@columbia.edu}

\author[Mckinley Xie]{Mckinley Xie}
\address{Texas A\&M University, College Station, Texas, United States of America}
\email{mckinleyxie@tamu.edu}

\subjclass[2020]{05D10, 05C15, 15A69, 52C10}
\keywords{Euclidean Ramsey theory, forbidden distances, graph copies, partition rank, flattening rank}

\begin{document}

\begin{abstract}
For a finite set $A \subset \R_{>0}$ and a finite graph $H$, let $\chi_H(\R^n;A)$ be the minimum number of colors required to color $\R^n$ while avoiding a monochromatic copy of $H$ whose edges have distances in $A$. Extending the graph-copy framework of Axenovich, Liu, and Sagdeev and a multiple distance theorem of Naslund, we prove for any positive integer $m$,
\[
\chi_H(\R^n;m):=\max_{\substack{A \subseteq \R_{>0} \\ |A|=m}} \chi_H(\R^n;A) \geq
\left(\Gamma_{\chi}\sqrt{\frac{m+1}{\Xi(H)}}+o(1)\right)^n.
\]
Here, $\Gamma_{\chi}$ is a constant and $\Xi(H)$ is an explicit structural parameter that can be substantially smaller than $|V(H)|-1$, thereby recovering Naslund's similar bound for complete graphs and improving the general bound inherited from the corresponding clique for many graph families. Along the way, we construct a weighted strengthening of the semi-diagonal flattening rank theorem of Correia, Sudakov, and Tomon.
\end{abstract}

\maketitle

\section{Introduction}

A central problem in geometric combinatorics, the Hadwiger-Nelson problem, asks for the chromatic number of the graph on $\R^2$ in which two points are adjacent when they are at Euclidean distance one. Its difficulty is a persistent reminder that even elementary distance constraints can hide substantial structure. The known bounds in the plane are
\[
5\leq \chi(\R^2)\leq 7.
\]
The upper bound comes from a classical hexagonal coloring, while the lower bound was recently raised from four to five by de Grey, with an independent proof by Exoo and Ismailescu \cite{soifer2009,degrey2018,exoo2020}. In higher dimensions, the same question connects to the Frankl-Wilson method, the Larman-Rogers upper bound, and Raigorodskii's exponential lower bounds \cite{larmanrogers1972,franklwilson1981,raigorodskii2000}. The combination of elementary formulation, stubborn planar mystery, and high-dimensional algebraic structure is one reason distance coloring problems have remained so influential.

Erd\H{o}s was one of the principal architects of the broader Euclidean Ramsey viewpoint. In the papers of Erd\H{o}s, Graham, Montgomery, Rothschild, Spencer, and Straus, the unit distance problem was placed into a larger program: color Euclidean space and force monochromatic copies of prescribed finite configurations \cite{erdos1973i,erdos1975ii,erdos1975iii}. That program led to the study of Ramsey sets, simplices, boxes, products, and many related geometric configurations, and it continues to guide modern work in the subject \cite{franklrod1990,graham2004}. The underlying theme is not merely that some finite pattern must appear. It is that the geometry and combinatorics of the pattern should be reflected in the number of colors required to avoid it.

Naslund's theorem on multiple forbidden distances fits this theme in a particularly sharp algebraic form \cite{naslund2022chromatic}. Instead of forbidding only distance one, one fixes $m$ distances and asks how many colors are needed to avoid a monochromatic clique $K_h$ on $h$ vertices with all of its edge lengths in that distance set. Setting $\chi_{K_h}(\R^n;m)$ to be the maximum of this over all $m$-element distance sets, Naslund proves that as $n \to \infty$,
\[
\chi_{K_h}(\R^n;m)\geq
\left(\Gamma_{\chi}\sqrt{\frac{m+1}{h-1}}+o(1)\right)^n,
\]
where
\[
\Gamma_{\chi}=\sqrt{\frac{\pi}{2}}\max_{x>0}\frac{1-e^{-x}}{\sqrt{x}}
=0.7998308498\ldots .
\]
The proof belongs to the polynomial method lineage that began with the work of Croot, Lev, and Pach and Ellenberg and Gijswijt, was clarified through Tao's slice rank formulation, and was extended by Naslund through partition rank \cite{crootlevpach,capset,tao2016capset,naslund2020partition}. For comparison, the Larman-Rogers argument gives the general upper bound
\[
\chi_{K_2}(\R^n;m)\leq (3+o_n(1))^{mn},
\]
while, for the particular distance set $\{1,\sqrt{2},\ldots,\sqrt{m}\}$ used in Naslund's construction, Kupavskii's bound is
\[
\chi_{K_2}(\R^n;\{1,\sqrt{2},\ldots,\sqrt{m}\})
\leq \left(2(\sqrt{m}+1)+o_n(1)\right)^n
\]
\cite{larmanrogers1972,kupavskii2010chromatic,naslund2022chromatic}. Thus the known lower and upper bounds for this special distance set have the same square root order in $m$, though their constants remain different.

A different recent development changes the forbidden object rather than the set of distances. Axenovich, Liu, and Sagdeev introduced a graph-copy version of Euclidean Ramsey theory in which one prescribes only the unit distances encoded by a graph $H$ \cite{axenovich2025ramsey}. This is not a cosmetic variation of the Hadwiger-Nelson problem. It replaces a rigid configuration by a flexible set of edge constraints: edges of $H$ must be realized by unit distances, while nonedges impose no restrictions. Their work shows that this graph-copy viewpoint has its own structure, its own product methods, and its own difficult small cases, while still extending both the unit distance chromatic number and classical Euclidean Ramsey questions.

The purpose of this article is to merge these two directions. Let $A\subset\R_{>0}$ be finite. Write $G_A(\R^n)$ for the graph with vertex set $\R^n$ in which two distinct points are adjacent if their distance lies in $A$. Let $H$ be a finite simple graph with at least one edge. A copy of $H$ in $G_A(\R^n)$ is an injective map
\[
\phi:V(H)\longrightarrow \R^n
\]
such that
\[
\norm{\phi(u)-\phi(v)}_2\in A
\]
for every edge $uv\in E(H)$. There is no condition on a nonedge of $H$. We write $\chi_H(\R^n;A)$ for the least number of colors needed to color $\R^n$ with no monochromatic copy of $H$ in $G_A(\R^n)$, and for a positive integer $m$ we put
\[
\chi_H(\R^n;m)=
\max_{\substack{A\subset\R_{>0}\\ |A|=m}}\chi_H(\R^n;A).
\]
When $A=\{1\}$, this is the graph-copy problem of Axenovich, Liu, and Sagdeev. When $H$ is a clique, it is Naslund's multiple distance problem.

The immediate way to apply Naslund's theorem to a graph $H$ is to embed $H$ into the complete graph on the same vertex set. If $|V(H)|=h$, then every monochromatic copy of $K_h$ contains a monochromatic copy of $H$, and therefore
\[
\chi_H(\R^n;m)\geq
\left(\Gamma_{\chi}\sqrt{\frac{m+1}{h-1}}+o(1)\right)^n.
\]
Our main result replaces the clique denominator $h-1$ by a statistic of the graph $H$ that is potentially much smaller. Recall that if $H$ is a graph and $\pi$ is a partition of $V(H)$ into at least two nonempty blocks, the quotient graph $H/\pi$ is the simplification of the graph whose vertices are the blocks of $\pi$, with two different blocks adjacent in $H/\pi$ when at least one edge of $H$ has one endpoint in each block. 

To define our statistic governing the base of the exponential lower bound on $\chi_H(\R^n;m)$, first define 
\[
\lambda(H)=\max_{\substack{\pi\text{ a partition of }V(H)\\ |\pi|\geq 2}}\delta(H/\pi),
\]
where $\delta(Q)$ is the minimum degree of a graph $Q$ and set $\lambda(K_1)=0$. The definition needs one adjustment for disconnected graphs. Suppose that the connected components of $H$ are $C_1,\ldots,C_s$. A connected envelope of $H$ is a finite connected graph $F$ that contains each $C_i$ as a subgraph; the embeddings of the different components into $F$ are not required to be disjoint. Define
\[
\hlam(H)=\min_F\lambda(F),
\]
where the minimum is over all connected envelopes $F$ of $H$. When $H$ is connected, we shall prove that $\hlam(H)=\lambda(H)$. Finally, arrange the vertex degrees of $H$ as
\[
d_1(H)\geq d_2(H)\geq\cdots\geq d_h(H).
\]
Because $H$ has an edge, it has at least two vertices and $d_2(H)$ is defined. Set
\begin{equation}\label{eq:def-xi}
\Xi(H)=\min\{\hlam(H),d_2(H)\}.
\end{equation}

Our main theorem replaces $h-1$ in Naslund's result with $\Xi(H)$:

\begin{theorem}\label{thm:main}
Let $H$ be a finite simple graph with at least one edge, and let $m$ be a positive integer. Then, as $n\to\infty$,
\[
\chi_H(\R^n;m) \geq
\left(\Gamma_{\chi} \sqrt{\frac{m+1}{\Xi(H)}}+o(1)\right)^n.
\]
\end{theorem}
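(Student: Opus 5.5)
The plan is to follow Naslund's polynomial-method argument for $K_h$ and replace the clique tensor by a tensor indexed by the edges of $H$. The rank analysis is then driven by the structure of $H$ rather than by $h-1$.

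\textbf{Point set and distances.} Following \cite{naslund2022chromatic}, I take a finite point set $X\subset\R^n$ of vectors with entries in a small alphabet and fixed weight. Inner products between distinct points of $X$ then take a bounded set of values, and I choose an $m$-element set $A$ of distances, realized by $m$ of these inner-product values $t_1,\dots,t_m$. Let $P$ be the degree-$m$ univariate polynomial vanishing at $t_1,\dots,t_m$; for $x\in X$ the value $P(\ip{x}{x})$ is a nonzero constant. The parameters are optimized at the end, and this optimization should produce the constant $\Gamma_\chi$.

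\textbf{The tensor.} Given a colour class $S\subseteq X$ with no copy of $H$ in $G_A(\R^n)$, I form the $|V(H)|$-way tensor
\[
T(x_v)_{v\in V(H)}=\prod_{uv\in E(H)}P(\ip{x_u}{x_v}),\qquad x_v\in S .
\]
If the $x_v$ are pairwise distinct, some edge has $\ip{x_u}{x_v}\notin\{t_i\}$ is impossible (that would make $\phi$ a copy of $H$), so some edge factor vanishes. Hence $T$ is supported on non-injective tuples. Each such tuple determines a partition $\pi$ of $V(H)$ with $|\pi|\ge 2$ or $|\pi|=1$, and on the tuples of type $\pi$ the tensor factors through the quotient $H/\pi$. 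This is a ``semi-diagonal'' structure.

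\textbf{Lower bound on the rank.} The weighted strengthening of the Correia--Sudakov--Tomon semi-diagonal flattening rank theorem, mentioned in the abstract, should give that $T$ has (weighted) flattening or partition rank at least about $|S|$. The point is that the diagonal contribution $P(\ip{x}{x})^{|E(H)|}$ is nonzero, and contributions from nontrivial partitions can be controlled.

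\textbf{Upper bound on the rank.} Expanding each $P(\ip{x_u}{x_v})$ into monomials, every edge term has total degree at most $m$ split between $x_u$ and $x_v$. I would bound the rank in two ways, matching the two terms in $\Xi(H)$.
\begin{itemize}
\item[(i)] \emph{The $d_2$ bound.} Every vertex other than one of maximum degree has degree at most $d_2(H)$. Flattening one low-degree vertex against the rest, and assigning each edge monomial to the endpoint of smaller degree, should leave some vertex receiving total degree at most $m\,d_2(H)/2$ in each term. The rank is then at most the number of monomials of that degree on $X$.
\item[(ii)] \emph{The $\hlam$ bound.} For $\hlam(H)$, I first pass to a connected envelope $F$, since a monochromatic copy of $F$ yields one of $H$; so it suffices to treat connected $H$ with $\lambda(H)$. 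I then run the partition-rank argument over the quotients $H/\pi$: the weights in the weighted theorem are chosen so that the cost of each partition term is governed by $\delta(H/\pi)\le\lambda(H)$.
\end{itemize}
Either way, the rank is at most the number of low-degree monomials of degree about $m\Xi(H)/2$. Comparing this with $|X|$ and optimizing gives $|S|\le |X|\big(\Gamma_\chi\sqrt{(m+1)/\Xi(H)}+o(1)\big)^{-n}$, and hence the stated bound on the number of colours.

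\textbf{Main obstacle.} The hard step is the weighted semi-diagonal rank theorem, together with checking that the off-diagonal partition terms are really dominated with cost $\lambda$ and not $h-1$. I would first try to adapt the Correia--Sudakov--Tomon induction on the number of coordinates, carrying vertex weights equal to the assigned degree budgets.
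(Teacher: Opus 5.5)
The central gap is in your distance detector, and this is exactly where the factor $m+1$ has to come from. You take $P$ of degree $m$ vanishing at the inner products $t_1,\dots,t_m$ of the forbidden distances, but that is the wrong vanishing set. For an injective tuple in an $H$-free colour class you only know that some edge $uv$ has $\ip{x_u}{x_v}\notin\{t_1,\dots,t_m\}$. Since $P$ vanishes only at the $t_i$, that edge factor is nonzero, so $T$ is not supported on non-injective tuples. A working factor must vanish at every non-forbidden inner product occurring in $X$ and stay nonzero on the forbidden ones and on the diagonal. Over $\R$ this forces degree at least the number of such values, which is of the order of $d_{\max}$ rather than $d_{\max}/(m+1)$, and the gain from $m$ disappears. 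Your own bookkeeping shows the problem: a per-edge degree of $m$ makes the bound get worse as $m$ grows, whereas the theorem's base improves like $\sqrt{m+1}$.

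The missing idea is Naslund's congruence detector. Work over $\F_p$ on a set of constant norm $\norm{x-c}_2^2=R$ (your constant-composition set or the paper's lattice sphere), so that $\frac12\norm{x-y}_2^2=R-\ip{x-c}{y-c}$ is linear in each point. Take the forbidden set $\{\sqrt{2p},\dots,\sqrt{2mp}\}$ with $p$ the least prime above $d_{\max}(S)/(m+1)$ and above $h$, and use $D_p(x,y)=1-(\frac12\norm{x-y}_2^2)^{p-1}$. Because $d_{\max}(S)<(m+1)p$, $D_p$ equals $1$ exactly when $x=y$ or $\norm{x-y}_2$ is one of the $m$ forbidden distances. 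It has degree $p-1\approx d_{\max}(S)/(m+1)$ in each endpoint. This smallness of $p$, fed into $|M_{\ell,n}((p-1)q)|\le(1+t+\cdots+t^\ell)^n t^{-(p-1)q}$ together with a prime-gap estimate, is what produces $\Gamma_\chi\sqrt{(m+1)/q}$. No halving is available either: each edge factor has full degree $p-1$ in both endpoints, so a vertex costs $(p-1)$ times its degree, not $d_2(H)/2$ times a per-edge degree.

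The rank lower bounds are also not available in the form you use. A semi-diagonal tensor can have a single flattening rank, and even its partition rank, equal to one. For example, $T(x_1,x_2,x_3)=[x_1=x_2]$ is semi-diagonal with $\frank_3(T)=1$ and partition rank $1$. So flattening one low-degree vertex against the rest gives nothing.

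The paper's $d_2$ bound instead uses the weighted theorem only through $\sum_{i\neq s}\frank_i(T_H)\ge|B|$, with $s$ a maximum-degree vertex, and bounds each remaining term by $|M_{\ell,n}((p-1)d_i(H))|$. The $\lambda$ bound does not use the weighted theorem at all. You must multiply by the distinctness indicator $I_h=\sum_{\pi\neq\hat1}\mu(\hat0,\pi)\delta_\pi$, whose diagonal value $(-1)^h(h-1)!$ is nonzero because $p>h$. This gives a genuinely diagonal tensor $I_hT_H$ of partition rank $|B|$. On each $\delta_\pi$ term, $T_H$ collapses to a product over the simple quotient $H/\pi$ (because $D_p$ is $0/1$-valued), and a minimum-degree block is split off at cost $(p-1)\delta(H/\pi)$.

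Finally, your reduction to connected $H$ is false. A connected envelope $F$ need not contain $H$, because the components may sit in $F$ with overlaps; for $H=2K_2$ one may take $F=K_2$. So a monochromatic $F$ does not yield a monochromatic $H$. The correct route starts from $\chi_H(\R^n;A)\ge\chi_{sF}(\R^n;A)$, since $sF\supseteq H$, and then proves $\chi_{sF}(\R^n;A)=\chi_F(\R^n;A)$ by an infinite argument. In an $sF$-avoiding colouring, each colour has a maximal matching of monochromatic $F$-copies with at most $s-1$ members. Deleting their finitely many vertices leaves an $F$-avoiding colouring of $\R^n\setminus Z$. De Bruijn--Erd\H{o}s compactness plus a translation then gives $\chi_F(\R^n\setminus Z;A)=\chi_F(\R^n;A)$.
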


The theorem recovers Naslund's result for $H=K_h$, since $\Xi(K_h)=h-1$. One can of course inherit Naslund's result and use $h-1$ in place of $\Xi(H)$ for any graph on $h$ vertices. However, its force is visible for sparse graph families. Stars and matchings have $\Xi=1$. Long paths and cycles have $\Xi=2$, even though their quotient parameter $\lambda$ grows on the order of the square root of their order. Every family of bounded maximum degree has bounded $\Xi$. 

The paper is organized as follows. \Cref{section:prelim} develops the rank, partition lattice, quotient graph, and finite Euclidean preliminaries in the order in which they are used. \Cref{section:weighted-frank} proves a weighted flattening rank theorem using the rank deletion step from the proof of Correia, Sudakov, and Tomon. \Cref{section:finite-rank} constructs the graph-copy tensor and gives the two color class estimates that produce $d_2(H)$ and $\lambda(H)$. \Cref{section:transfer} explains the passage from those finite estimates to Euclidean colorings and proves \Cref{thm:main}. \Cref{section:the-statistic} develops $\Xi$ in detail, with special attention to graph families of growing order. \Cref{section:conclusion} discusses future directions.

\section{Preliminaries}\label{section:prelim}

\subsection{Tensors and three rank notions}

Let $k\geq2$. Let $X_1,\ldots,X_k$ be finite sets and let $\F$ be a field. A tensor is simply a function
\[
T:X_1\times\cdots\times X_k\longrightarrow\F.
\]
It may be helpful to think of $T$ as a $k$-dimensional array whose entries are indexed by one element from each set. The rank notions below measure how efficiently this array can be written as a sum of functions that split into independent groups of variables.

A slice is a tensor of the form
\[
T(x_1,\ldots,x_k)=f(x_i)g(x_1,\ldots,x_{i-1},x_{i+1},\ldots,x_k)
\]
for some coordinate $i$, $f:X_i \to \F$ and $g:X_1 \times \cdots \times X_{i-1} \times X_{i+1} \times \cdots \times X_k \to \F$. The slice rank of $T$, denoted $\srank(T)$, is the least number of slices whose sum is $T$. Thus one factor of a slice sees a single variable and the other factor sees all remaining variables. Partition rank permits more general separations. A nontrivial partition of $[k]=\{1,\ldots,k\}$ is a partition with at least two blocks. If $\rho=\rho_1|\cdots|\rho_r$ is such a partition, then $T$ has partition rank one with respect to $\rho$ if
\[
T(x_1,\ldots,x_k)=\prod_{j=1}^r T_j((x_i)_{i\in\rho_j}),
\]
where for each $j$, $T_j:\prod_{i \in \rho_j} X_i \to \F$. The partition rank of $T$, denoted $\prank(T)$, is the least number of partition rank one tensors whose sum is $T$. Every slice has partition rank one, but partition rank also allows the variables to split into several blocks of size larger than one. If $B$ is finite and $T:B^k\to\F$ is nonzero exactly when $b_1=\cdots=b_k$, then
\begin{equation}\label{eq:diagonal-rank}
\prank(T)=|B|;
\end{equation}
this is the partition rank analogue of the fact that a diagonal matrix with nonzero diagonal entries has full matrix rank \cite{naslund2020partition,tao2016capset}. We use only \labelcref{eq:diagonal-rank}; no other structural result about partition rank is required.

The third notion is flattening rank. If $T:X_1\times\cdots\times X_k\to\F$, its $i$-flattening matrix has rows indexed by $X_i$, columns indexed by the product of all other $X_j$, and the entry in row $x_i$ and column $(x_j)_{j\neq i}$ equal to $T(x_1,\ldots,x_k)$. We write
\[
\frank_i(T)=\operatorname{rank}M_i(T).
\]
Equivalently, $\frank_i(T)$ is the least number of terms needed to express $T$ as
\[
T=\sum_{r=1}^R f_r(x_i)g_r((x_j)_{j\neq i}).
\]
This equivalent formulation is the one used in the polynomial estimates.

A tensor $T:B^k\to\F$ is \emph{semi-diagonal} if
\[
T(b,\ldots,b)\neq 0\qquad\text{for every }b\in B
\]
and
\[
T(b_1,\ldots,b_k)=0\qquad\text{whenever }b_1,\ldots,b_k
\text{ are all distinct}.
\]
There is no restriction on a tuple that is neither fully diagonal nor all distinct. Correia, Sudakov, and Tomon proved that a semi-diagonal tensor satisfies
\[
\sum_{i=1}^k\frank_i(T)\geq \frac{k}{k-1}|B|
\]
\cite{correia2021flattening}. In \Cref{section:weighted-frank} we prove a weighted strengthening that allows one specified flattening rank to be omitted entirely.

\subsection{The partition lattice and the distinctness indicator}

For a finite set $U$, let $\Pi_U$ be the set of partitions of $U$, ordered by refinement. For $U=[k]$, the minimal partition is
\[
\hat 0=1|2|\cdots|k,
\]
and the maximal partition is
\[
\hat 1=12\cdots k.
\]
For a subset $P\subseteq[k]$, write $x_P=(x_i)_{i\in P}$, and let $\delta_P(x_P)$ be one when all coordinates indexed by $P$ are equal and zero otherwise. If $\pi\in\Pi_{[k]}$, define
\[
\delta_\pi(x_1,\ldots,x_k)=1
\]
when $x_i=x_j$ whenever $i$ and $j$ lie in the same block of $\pi$, and define it to be zero otherwise. Thus $\delta_\pi$ forces equality within each block, but it does not force different blocks to receive different values.

Let $\mu$ be the M\"obius function of the partition lattice \cite{stanley2012}. Naslund's distinctness indicator for $k \geq 2$ is
\begin{equation}\label{eq:ik}
I_k(x_1,\ldots,x_k)=
\sum_{\substack{\pi\in\Pi_{[k]}\\ \pi\neq\hat 1}}
\mu(\hat 0,\pi)\delta_\pi(x_1,\ldots,x_k).
\end{equation}
The omission of the one-block partition is essential: it makes the function nonzero on the full diagonal.

\begin{lemma}\label{lem:distinctness}
For every $(x_1,\ldots,x_k)$,
\[
I_k(x_1,\ldots,x_k)=
\begin{cases}
1, & x_1,\ldots,x_k\text{ are all distinct},\\
(-1)^k(k-1)!, & x_1=\cdots=x_k,\\
0, & \text{otherwise}.
\end{cases}
\]
In particular, the full diagonal value is nonzero over every field of characteristic greater than $k$.
\end{lemma}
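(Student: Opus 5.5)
Fix a tuple $x=(x_1,\ldots,x_k)$ and let $\sigma=\sigma(x)\in\Pi_{[k]}$ be its kernel partition: $i$ and $j$ share a block exactly when $x_i=x_j$. The whole computation reduces to Möbius inversion on $\Pi_{[k]}$, through the identity
\[
\delta_\pi(x)=1 \iff \pi\le\sigma .
\]
This holds because $\delta_\pi$ only forces equality inside blocks of $\pi$, and $x$ satisfies those equalities precisely when each block of $\pi$ lies inside a block of $\sigma$.

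Substituting this into \eqref{eq:ik} gives
\[
I_k(x)=\sum_{\substack{\pi\le\sigma\\ \pi\neq\hat 1}}\mu(\hat 0,\pi).
\]
The three cases in the statement correspond to the three possibilities for $\sigma$.

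\emph{All entries distinct.} If $\sigma=\hat 0$, the only term is $\pi=\hat 0$, and since $k\ge2$ we have $\hat 0\neq\hat 1$. Hence $I_k(x)=\mu(\hat 0,\hat 0)=1$.

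\emph{Neither distinct nor fully diagonal.} If $\hat 0<\sigma<\hat 1$, the constraint $\pi\neq\hat 1$ is automatic, since $\pi\le\sigma<\hat 1$. The sum is then the full interval sum $\sum_{\hat 0\le\pi\le\sigma}\mu(\hat 0,\pi)$, which vanishes by the defining recursion of the Möbius function because $\sigma\neq\hat 0$.

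\emph{Fully diagonal.} If $\sigma=\hat 1$, the same recursion on the whole lattice gives
\[
\sum_{\pi<\hat 1}\mu(\hat 0,\pi)=-\mu(\hat 0,\hat 1).
\]
I would then cite the standard value $\mu_{\Pi_k}(\hat 0,\hat 1)=(-1)^{k-1}(k-1)!$ from \cite{stanley2012}. This yields $I_k=(-1)^k(k-1)!$.

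This last evaluation is the only non-formal input. If a self-contained argument is wanted, I would use the recursion that comes from fixing the block containing $k$, or alternatively the generating-function identity $\sum_{\pi}\mu(\hat 0,\pi)\prod_{B\in\pi}t=t(t-1)\cdots(t-k+1)$ (the characteristic polynomial of $\Pi_k$) and read off the coefficient of $t$. Either route suffices; I would simply cite it.

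Finally, every step above is an identity of integers, so it holds in any field by reduction. The integer $(k-1)!$ has all its prime factors at most $k-1$, so its image in a field of characteristic $p>k$ is nonzero. This gives the final claim about the full diagonal value.
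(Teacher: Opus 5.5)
Your proof is correct and takes essentially the paper's approach. The paper only remarks that the lemma follows from a Möbius inversion calculation, citing the partition-indicator framework of \cite{omar2025partition}; your reduction through the kernel partition $\sigma$ to $\sum_{\pi\le\sigma,\,\pi\neq\hat 1}\mu(\hat 0,\pi)$, followed by the defining recursion of $\mu$ and the value $\mu(\hat 0,\hat 1)=(-1)^{k-1}(k-1)!$, is exactly that calculation written out, and your treatment of the characteristic condition is also sound.
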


This can be established by a M\"{o}bius inversion calculation that is a special case of the partition-indicator viewpoint developed in \cite{omar2025partition}. In this paper, it will be multiplied by a graph-copy tensor that vanishes on all-distinct forbidden tuples. The product will then vanish off the full diagonal and retain the nonzero value from \Cref{lem:distinctness} on the diagonal.

\subsection{Quotient graphs as equality patterns}

The quotient graph defined in the introduction is exactly what remains from a graph-copy tensor after the equalities in $\delta_\pi$ are imposed. Suppose that the blocks of $\pi$ are $P_1,\ldots,P_r$, and let $y_j$ denote the common value of all variables $x_i$ with $i\in P_j$. An edge of $H$ whose endpoints lie in the same block produces a diagonal factor. Several edges running between the same two blocks produce repeated copies of the same factor. In our finite-field model that factor takes only the values zero and one, so repeated copies collapse to one. The surviving factors are therefore indexed by the edges of the simple quotient $H/\pi$.

\begin{example}\label{ex:p4-quotient}
Let $H=P_4$ have edges $12,23,34$. Take
\[
\pi=14|2|3.
\]
The block $\{1,4\}$ is adjacent in $H/\pi$ to the block $\{2\}$ because of the edge $12$, and it is adjacent to $\{3\}$ because of the edge $34$. The remaining edge $23$ joins the other two blocks. Thus
\[
P_4/\pi\cong K_3.
\]
This small example is important conceptually: even a path can have a quotient denser than the original graph. The parameter $\lambda$ is designed to record the largest minimum degree created by any such equality pattern.
\end{example}

\subsection{The distance detector}

\ghost{ We are interested in the case as the dimension $n$ goes to infinity. Let $d_n \in \R$, which should be thought of as $O(\sqrt{n})$, and choose $\ell = 2d_n$.  
Let $S \subseteq \{0, \dots, \ell\}^n$ be a set of maximal size of lattice points such that
the following hold:
\begin{itemize}
    \item There exists a $c \in \{0, \dots, \ell\}^n$, such that 
        for any $x,y \in S$, $\norm{x-c}_2^2 = \norm{y-c}_2^2$.
    \item The diameter of $S$ is at most $d_n$.
\end{itemize}

\begin{lemma} \label{lem:size_of_S}
    $|S| \geq  \left(\left(d_n\sqrt{\frac{e\pi}{2n}}\right)(1 + o(1))\right)^n.$
\end{lemma}
\begin{proof}
We will construct a set with the required size.

Let $c = \ell/2 \cdot \mathbf{1}$, and for 
$b \in [-1/2,1/2)^n$ let $S^\prime(b)$ 
be the set of lattice points contained in the $n$-ball
of diameter $d_n$ centered at $c + c$. Now, we will find an appropriate shift to maximize the number of lattice points within $S$.

Let $f(c) := |S^\prime(c)|$. 
Note that the integral of $f$ over $[-1/2,1/2)^n$ is exactly the volume of a 
$n$-dimensional sphere of diameter $d_n$; one can think of $f$ as the result of cutting up 
the $n$-dimensional sphere into unit hypercubes and ``stacking'' them all into the region $[-1/2,1/2)^n$.
As this region has volume 1, there exists a particular $c_0 \in [-1/2,1/2)^n$ such that 
\[|S^\prime (\vec{b_0})| = f(b_0) \geq
\left( \frac {d_n}2 \right)^n V_n = \left( \frac {d_n}2 \right)^n\frac{\pi^{n/2}}{\Gamma(n/2+1)},\] where $V_n$ is the volume of the unit $n$-ball.

Using the upper-bound $\Gamma(x) \leq \mathrm{exp}(\int_2^{x+1} \ln t \,\mathrm{d}t) = \frac{e^2}{4} \left(\frac{x+1}{e}\right)^{x+1}$,
we get \[|S^\prime(b_0)| \geq \frac{16}{(n+4)^2} \left( d_n\sqrt{\frac{e\pi}{2(n+4)}} \right)^{n}.\]

Since $S^\prime$ is a set of lattice points, $x \in S^\prime(b_0)$, $\norm{x - c}_2^2$ takes on one of at most $(\frac{d_n+ \sqrt{n}}2)^2$ distinct values,
so \[|S| \geq \frac{64}{(n+4)^2 (d_n+\sqrt{n})^2} \left( d_n \sqrt{ \frac{e\pi}{2(n+4)}} \right)^n,\]
yielding the desired asymptotic bound.
\end{proof}
}

In Naslund's lower bound argument for $\chi_{K_h}(\R^n;m)$, a special subset $S \subset \{0,1,\ldots,\ell\}^n$ is constructed that has sufficiently large size, and the lower bound is a strategic multiple of it. We present a different set $S$ that gets the same job done but substantially simplifies Naslund's argument. We now introduce this set $S$.

\begin{lemma} \label{lem:size_of_S}
Let $d_n \in \N$ be positive with $d_n=O(\sqrt{n})$, and choose $\ell=2d_n$. There exist a set $S\subseteq\{0,\ldots,\ell\}^n$ and a $c\in\{0,\ldots,\ell\}^n$ such that $\norm{x-c}_2^2=\norm{y-c}_2^2$ for every $x,y\in S$, the diameter of $S$ is at most $d_n$, and
\[
|S| \geq \left(\left(d_n\sqrt{\frac{e\pi}{2n}}\right)(1+o(1))\right)^n.
\]
\end{lemma}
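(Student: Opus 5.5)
We plan to count lattice points in a ball by averaging over translates, and then to use pigeonhole on the squared distance to the center to produce the required sphere. Take $c=d_n\mathbf 1\in\{0,\ldots,\ell\}^n$, the center of the cube $[0,\ell]^n$, and let $r=d_n/2$. The first step is to fix a translate of the closed ball $B(c+b,r)$ with $b\in[-1/2,1/2)^n$ that contains many integer points. Set $f(b)=|\mathbb Z^n\cap B(c+b,r)|$. Integrating $f$ over the unit cube $[-1/2,1/2)^n$ gives exactly $\operatorname{vol}B(0,r)=r^nV_n$, because the translates of the unit cube by $\mathbb Z^n$ tile space. So some $b_0$ satisfies
\[
f(b_0)\ge r^nV_n=\left(\frac{d_n}{2}\right)^n\frac{\pi^{n/2}}{\Gamma(n/2+1)}.
\]
Let $S'$ be this set of integer points. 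Any $x\in S'$ has $\norm{x-c}_\infty\le \norm{x-c-b_0}_2+\norm{b_0}_\infty\le d_n/2+1/2\le d_n$, so $S'\subseteq\{0,\ldots,\ell\}^n$. Its diameter is at most $2r=d_n$.

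The second step produces equal distances to $c$. Since $x$ and $c$ are integer vectors, $\norm{x-c}_2^2$ is a nonnegative integer. It is bounded by $(r+\norm{b_0}_2)^2\le\bigl(\tfrac{d_n+\sqrt n}{2}\bigr)^2=O(n)$, using $d_n=O(\sqrt n)$. By pigeonhole, one value is attained by a subset $S\subseteq S'$ with $|S|\ge |S'|/O(n)$. This $S$ lies on a sphere about $c$, is contained in the cube, and has diameter at most $d_n$. The polynomial loss disappears into the $(1+o(1))^n$ factor.

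The remaining step, and the only place where care is needed, is the asymptotic evaluation. By Stirling, $\Gamma(n/2+1)=\bigl((1+o(1))\sqrt{n/(2e)}\bigr)^n$, so
\[
\left(\frac{d_n}{2}\right)^n\frac{\pi^{n/2}}{\Gamma(n/2+1)}=\left((1+o(1))\,\frac{d_n}{2}\sqrt{\pi}\sqrt{\frac{2e}{n}}\right)^n=\left((1+o(1))\,d_n\sqrt{\frac{e\pi}{2n}}\right)^n.
\]
Every polynomial factor, and every lower-order term such as replacing $n$ by $n+4$ in an explicit Gamma bound, contributes only $(1+o(1))^n$. This gives the stated bound.
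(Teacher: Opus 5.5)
Your proposal is correct and follows the paper's proof essentially step for step. Both arguments take the center $c=d_n\mathbf{1}$, average the lattice-point count of translated balls of diameter $d_n$ over the unit cube, pigeonhole on the integer value $\norm{x-c}_2^2$ among $O(n)$ possibilities, and absorb the polynomial losses into $(1+o(1))^n$. The only cosmetic difference is that you apply Stirling's formula to $\Gamma(n/2+1)$ directly, whereas the paper uses the explicit bound $\Gamma(x)\le \frac{e^2}{4}\bigl(\frac{x+1}{e}\bigr)^{x+1}$ and then replaces $n+4$ by $n$.
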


\begin{proof}
We will construct a set with the required size.

Let $c=\ell/2\cdot\mathbf{1}=d_n\mathbf{1}$ where $\mathbf{1}$ is the all-ones vector. For $b\in[-1/2,1/2)^n$ let $S^\prime(b)$ be the set of lattice points contained in the $n$-ball of diameter $d_n$ centered at $c+b$. Since $d_n\geq1$, this ball is contained in $[0,\ell]^n$.

Let $f(b):=|S^\prime(b)|$. Note that the integral of $f$ over $[-1/2,1/2)^n$ is exactly the volume of an $n$-dimensional ball of diameter $d_n$; one can think of $f$ as the result of cutting up the $n$-dimensional ball into unit hypercubes and ``stacking'' them all into the region $[-1/2,1/2)^n$. As this region has volume 1, there exists a particular $b_0\in[-1/2,1/2)^n$ such that
\[
|S^\prime(b_0)|=f(b_0)\geq
\left(\frac{d_n}{2}\right)^nV_n
=\left(\frac{d_n}{2}\right)^n\frac{\pi^{n/2}}{\Gamma(n/2+1)},
\]
where $V_n$ is the volume of the unit $n$-ball and $\Gamma$ is the classical gamma function.

Using the upper bound $\Gamma(x)\leq\mathrm{exp}(\int_2^{x+1}\ln t\,\mathrm{d}t)=\frac{e^2}{4}\left(\frac{x+1}{e}\right)^{x+1}$, we get
\[
|S^\prime(b_0)|\geq
\frac{16}{(n+4)^2}
\left(d_n\sqrt{\frac{e\pi}{2(n+4)}}\right)^n.
\]

For every $x\in S^\prime(b_0)$, the triangle inequality gives $\norm{x-c}_2\leq(d_n+\sqrt n)/2$. Since $x$ and $c$ are lattice points, $\norm{x-c}_2^2$ is an integer and therefore takes on at most $\lfloor((d_n+\sqrt n)/2)^2\rfloor+1\leq((d_n+\sqrt n)^2+4)/4$ distinct values. By the pigeonhole principle, one of the corresponding level sets $S$ has size at least
\[
|S|\geq
\frac{64}{(n+4)^2\left((d_n+\sqrt n)^2+4\right)}
\left(d_n\sqrt{\frac{e\pi}{2(n+4)}}\right)^n.
\]
This level set has constant squared distance from $c$, and its diameter is at most $d_n$ because it is contained in a ball of diameter $d_n$. Since $d_n=O(\sqrt n)$, the $n$th root of the factor outside the power tends to one, and replacing $n+4$ by $n$ inside the power also introduces a factor $1+o(1)$. This yields the desired asymptotic bound.
\end{proof}

Consequently, for $x,y\in S$ (where $S$ is the set from Lemma~\ref{lem:size_of_S}) and some constant $R$,
\begin{equation}\label{eq:half-distance}
\frac12 \norm{x - y}_2^2 = \frac12\norm{(x-c)-(y-c)}_2^2=R-\ip{x-c}{y-c},
\end{equation}
which is an integer. The constant norm condition is what changes the distance expression from a quadratic polynomial in one point to a linear polynomial in that point once the other point is fixed.

Let $p$ be an odd prime and work over $\F_p$. Define
\begin{equation}\label{eq:detector}
D_p(x,y)=1-\left(\frac12\norm{x-y}_2^2\right)^{p-1}.
\end{equation}
Fermat's little theorem gives
\[
D_p(x,y)=
\begin{cases}
1, & \frac12\norm{x-y}_2^2\equiv 0\pmod p,\\
0, & \frac12\norm{x-y}_2^2\not\equiv 0\pmod p.
\end{cases}
\]
In particular, $D_p$ takes only the values zero and one on the finite model, and therefore $D_p(x,y)^r=D_p(x,y)$ for every positive integer $r$.

Put
\[
d_{\max}(S)=\max_{x,y\in S}\frac12\norm{x-y}_2^2.
\]
If $d_{\max}(S)<(m+1)p$, then
\[
\frac12\norm{x-y}_2^2\equiv0\pmod p
\]
is equivalent to
\[
\frac12\norm{x-y}_2^2\in\{0,p,2p,\ldots,mp\}.
\]
Thus, for distinct points, $D_p$ detects precisely the distance set
\begin{equation}\label{eq:amp}
A_{m,p}=\{\sqrt{2p},\sqrt{4p},\ldots,\sqrt{2mp}\}.
\end{equation}

Finally, for a real number $L\geq0$, define
\[
M_{\ell,n}(L)=
\left\{\alpha\in\{0,1,\ldots,\ell\}^n:
\alpha_1+\cdots+\alpha_n\leq L\right\}.
\]
The next elementary observation converts a degree bound into a flattening or partition rank bound.

\begin{lemma}\label{lem:monomial-count}
Let $P(z_1,\ldots,z_n)$ be a polynomial whose restriction to $\{0,1,\ldots,\ell\}^n$ has a representative of total degree at most $L$. Then its restriction to any subset of the box belongs to a vector space of dimension at most $|M_{\ell,n}(L)|$.
\end{lemma}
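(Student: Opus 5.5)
The plan is to reduce every polynomial to a combination of monomials lying in $M_{\ell,n}(L)$, viewing each monomial as a function on the box.

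\emph{Reduction of exponents.} On $\{0,1,\ldots,\ell\}$ every element satisfies $\prod_{a=0}^{\ell}(z-a)=0$. Hence $z^{\ell+1}$ agrees, as a function on $\{0,\ldots,\ell\}$, with a polynomial in $z$ of degree at most $\ell$. This identity holds over $\R$ or over $\F_p$. When working over $\F_p$ with $p\le \ell$, the values $0,\ldots,\ell$ are taken modulo $p$, and the same product still annihilates every point of the box.

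\emph{Reducing a representative.} Fix a representative $Q$ of total degree at most $L$ that agrees with $P$ on the box. Apply the reduction repeatedly, one variable at a time, replacing each occurrence of $z_i^{e}$ with $e>\ell$. Each replacement strictly lowers the degree in $z_i$ and does not raise the total degree, since a monomial of degree $e$ is replaced by terms of degree at most $\ell<e$ in that variable. The process terminates and yields a polynomial $\tilde Q$ with the following properties:
\begin{itemize}[label={}]
\item every exponent is at most $\ell$;
\item the total degree is at most $L$;
\item $\tilde Q$ agrees with $P$ on the box.
\end{itemize}

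\emph{Conclusion.} The monomials of $\tilde Q$ have exponent vectors in $M_{\ell,n}(L)$. So the restriction of $P$ to the box lies in the span of the functions $z\mapsto z^{\alpha}$ for $\alpha\in M_{\ell,n}(L)$. Restriction to a subset $X$ of the box is a linear map, so the restrictions to $X$ lie in the image of this span. That image has dimension at most $|M_{\ell,n}(L)|$. The "vector space" in the statement is understood as this common space, which depends only on $\ell$, $n$, $L$ and $X$, and not on $P$. This is what the later rank counting needs.

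There is no real obstacle. The one point to check is that the reduction never increases total degree, and this follows directly from the form of the vanishing polynomial.
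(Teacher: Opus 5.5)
Your proof is correct and follows the same route as the paper: reduce each coordinate modulo $\prod_{r=0}^{\ell}(z_j-r)$, which caps every exponent at $\ell$ without raising the total degree. The restriction therefore lies in the span of the monomials $z^\alpha$ with $\alpha\in M_{\ell,n}(L)$, and restricting to a subset cannot increase dimension; your remark that this spanning space is independent of $P$ is a useful clarification of how the lemma is applied later.
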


\begin{proof}
For each coordinate $z_j$, divide by
\[
\prod_{r=0}^{\ell}(z_j-r).
\]
The remainder has degree at most $\ell$ in $z_j$, agrees with the original polynomial on every point of the box, and does not have larger total degree. Repeating this reduction in every coordinate gives a representative spanned by the monomials
\[
z^\alpha=z_1^{\alpha_1}\cdots z_n^{\alpha_n}
\]
with $0\leq\alpha_j\leq\ell$ and $|\alpha|\leq L$. There are exactly $|M_{\ell,n}(L)|$ such monomials. Restricting the functions to a smaller set cannot increase the dimension of their span.
\end{proof}

\section{Weighted flattening rank}\label{section:weighted-frank}

The ordinary semi-diagonal theorem controls the sum of all flattening ranks. For the graph-copy tensor, however, one vertex may have much larger degree than all of the others. The correct estimate should be allowed to ignore the flattening belonging to that vertex. We obtain this from a weighted version of the theorem of Correia, Sudakov, and Tomon. The proof rests on the following rank deletion step. It is implicit in the proof of \cite[Theorem~2]{correia2021flattening}. We include its proof for completeness.

\begin{lemma}\label{lem:rank deletion}
Let $k\geq2$. Let $T:B^k\to\F$ be semi-diagonal and suppose that $|B|\geq k$. There exist a set $J\subseteq[k]$ with $|J|\geq2$ and a nonempty set $Y\subseteq B$ with
\[
|Y|\leq |J|-1
\]
such that, for $T'=T|_{(B\setminus Y)^k}$,
\[
\frank_j(T)\geq\frank_j(T')+1
\qquad\text{for every }j\in J. 
\]
\end{lemma}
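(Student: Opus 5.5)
The plan is to find a single nonzero tuple of $T$ that certifies a rank drop in many flattenings at once. The drop will come from a block-triangular submatrix argument.

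\textbf{Reduction to a local certificate.} Fix $j$ and write $M=M_j(T)$. After deleting $Y$, the matrix $M_j(T')$ is the submatrix of $M$ with rows in $B\setminus Y$ and columns in $(B\setminus Y)^{k-1}$. Suppose there are a row $y\in Y$ and a column $z\in B^{k-1}$ with at least one entry in $Y$ such that $M[y,z]\neq0$ and $M[x,z]=0$ for all $x\in B\setminus Y$. Then $M$ contains the submatrix with rows $(B\setminus Y)\cup\{y\}$ and columns $(B\setminus Y)^{k-1}\cup\{z\}$. This submatrix has the block form $\begin{pmatrix} M_j(T') & 0\\ * & M[y,z]\end{pmatrix}$, so its rank is $\frank_j(T')+1$, and hence $\frank_j(T)\geq\frank_j(T')+1$. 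So it suffices to produce $Y$ and $J$ together with such a pair $(y,z)$ for each $j\in J$.

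\textbf{Choice of the certificate.} Among all tuples $t\in B^k$ with $T(t)\neq0$, choose one whose number $s$ of distinct entries is maximal. Diagonal tuples are nonzero, so $s\geq1$, and semi-diagonality gives $s\leq k-1$. Take $Y$ to be the set of values occurring in $t$, so $1\leq|Y|=s$. Take $J$ to be the set of coordinates $j$ whose value $t_j$ occurs at least twice in $t$. For $j\in J$, set $y=t_j$ and let $z$ be $t$ with coordinate $j$ removed; $z$ still contains the value $t_j\in Y$. If $t_j$ is replaced by any $x\notin Y$, the tuple acquires $s+1$ distinct values, so $T$ vanishes there by maximality of $s$. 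This is exactly the vanishing condition required above. The hypothesis $|B|\geq k$ guarantees that such $x$ exist, so the situation is not vacuous, and that the maximization is over a genuine range of values of $s$.

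\textbf{The count, which is the main obstacle.} We need $|J|\geq|Y|+1$. Let $u$ be the number of values appearing exactly once in $t$. Then $|J|=k-u$, and $k\geq 2s-u$, which is not obviously at least $s+1$. If $u=0$, then $|J|=k\geq s+1$ and we are done. In general the singleton values are the problem. A first remedy is to refine the extremal choice: among tuples with $s$ maximal, choose one minimizing $u$. Suppose replacing a singleton entry $t_i$ by some $x\notin Y$ gave a nonzero tuple. That tuple again has $s$ distinct values, and I would try to use such moves, together with $|B|\geq k$, to contradict minimality or to merge singletons into repeated classes. The other remedy is to move the singleton values out of $Y$ by shrinking it to the repeated values only, which gives $|J|\geq 2|Y|\geq|Y|+1$. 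The cost is that the vanishing condition must then be checked for $x$ among the singleton values of $t$, and this needs an additional maximality argument. I expect reconciling these two requirements to be the crux, and I would first try the refined extremal choice.
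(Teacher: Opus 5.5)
\textbf{There is a genuine gap: the choice of $J$ and $Y$, which is the heart of the lemma, is never settled.} Your reduction to a local certificate is correct, and it is exactly how the paper finishes. A column $z\notin(B\setminus Y)^{k-1}$ of $M_j(T)$ that vanishes on the rows $B\setminus Y$ but not on some row in $Y$ gives $\frank_j(T)\geq\frank_j(T')+1$. Starting from a nonzero tuple with the maximal number of distinct values is also the paper's starting point. The problem is that neither of your remedies works as stated.

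Taking $Y$ to be all values of $t$ breaks the count: for $t=(1,1,2,3,4)$ you get $|J|=2$ and $|Y|=4$. Taking $Y$ to be only the repeated values breaks the vanishing condition. Replacing a repeated entry $t_j$ by a singleton value $t_r$ preserves the value set, so maximality of $s$ says nothing about that tuple. Minimizing $u$ only rules this out when $t_j$ occurs at least three times. If $t_j$ occurs exactly twice, $t_r$ stops being a singleton while $t_j$ becomes one, so $u$ is unchanged.

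The failure is in the conclusion itself, not just in your certificate. Let $k=3$, $B=\{1,2,3\}$, and let $T$ be the indicator of $\{(1,1,1),(2,2,2),(3,3,3),(2,1,1),(1,1,2),(2,1,2),(1,2,2)\}$; this $T$ is semi-diagonal. The tuple $t=(1,1,2)$ is admissible under both of your extremal rules, since every tuple with two distinct values has $u=1$ here. It gives $J=\{1,2\}$ and $Y=\{1\}$. But rows $1$ and $2$ of $M_1(T)$ coincide, so $\frank_1(T)=2=\frank_1(T|_{\{2,3\}^3})$, and there is no drop at $j=1$.

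The missing idea is to close up under precisely these value-set-preserving moves. The paper forms a graph on the nonzero tuples $b$ with the same value set as the maximizer $a$, joining two tuples when they differ in one coordinate, and takes the component $\mathcal C$ containing $a$. It sets $J$ to be the coordinates repeated in some $b\in\mathcal C$, and $Y$ to be the value set of $a$ minus the values $a_r$ with $r\notin J$.

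\begin{itemize}
\item Coordinates outside $J$ are frozen along $\mathcal C$, because any coordinate that changes along an edge had a repeated old value. They also carry distinct singleton values, so $|Y|=s-(k-|J|)\leq|J|-1$ since $s\leq k-1$.
\item For $j\in J$, the certificate comes from some $b\in\mathcal C$ in which $j$ is repeated, not necessarily from $a$.
\item Moving $b_j$ to a value outside the value set is killed by maximality.
\item Moving $b_j$ to a frozen value $a_r$ would produce a neighbor of $b$ in $\mathcal C$ in which $r$ is repeated, contradicting $r\notin J$.
\end{itemize}

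In the example above, $(1,1,2)$ and $(2,1,2)$ are adjacent in $\mathcal C$, so this construction gives $J=\{1,2,3\}$ and $Y=\{1,2\}$, and the lemma holds.
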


\begin{proof}
For $a=(a_1,\ldots,a_k)\in B^k$, let $\{a\}$ be the set of values appearing among its coordinates, and let $\phi(a)$ be the set of indices whose coordinate value appears at least twice. For $i\in[k]$, let $a[i]\in\F^B$ be the row vector defined by
\[
a[i](b)=T(a_1,\ldots,a_{i-1},b,a_{i+1},\ldots,a_k).
\]
Thus $\frank_i(T)$ is the dimension of the span of the vectors $a[i]$.

Choose $a\in B^k$ with $T(a)\neq0$ such that $|\{a\}|$ is as large as possible. Since $T$ is semi-diagonal, $|\{a\}|\leq k-1$. Let $\mathcal A$ be the set of all $b\in B^k$ such that $T(b)\neq0$ and $\{b\}=\{a\}$. Join two elements of $\mathcal A$ when they differ in exactly one coordinate, and let $\mathcal C$ be the connected component containing $a$.

Call an index $j\in[k]$ good if $j\in\phi(b)$ for some $b\in\mathcal C$, and let $J$ be the set of good indices. Since $a$ has at most $k-1$ distinct coordinate values, at least two of its coordinates are repeated, so $|J|\geq2$.

If $j\notin J$, then $a_j=b_j$ for every $b\in\mathcal C$. Indeed, whenever two consecutive elements of $\mathcal C$ differ in coordinate $r$, the old value in that coordinate must occur in another coordinate, since the set of coordinate values remains unchanged. Hence $r$ is good, and no coordinate outside $J$ can change along a path in $\mathcal C$.

Set $X=\{a_j:j\notin J\}$ and $Y=\{a\}\setminus X$. The values in $X$ are distinct and occur only once in $a$. Consequently,
\[
|Y|=|\{a\}|-(k-|J|)\leq |J|-1.
\]
Also, $Y$ is nonempty because the indices in $J$ correspond to repeated coordinate values.

Fix $j\in J$. Choose $b\in\mathcal C$ with $j\in\phi(b)$ and put $v_j=b[j]$. The maximality of $|\{a\}|$ implies that $\operatorname{supp}(v_j)\subseteq\{a\}$. Otherwise, replacing the $j$th coordinate of $b$ by an element of $\operatorname{supp}(v_j)\setminus\{a\}$ would produce a nonzero tuple with more distinct coordinate values.

In fact, $\operatorname{supp}(v_j)\subseteq Y$. Suppose that $c\in X\cap\operatorname{supp}(v_j)$. There is a unique $r\notin J$ with $a_r=c$, and this coordinate is fixed throughout $\mathcal C$. Since $j\in\phi(b)$, we have $b_j\neq c$, for otherwise $r$ would also be good. Replacing $b_j$ by $c$ therefore gives a tuple in $\mathcal C$ in which the coordinate $r$ is repeated, contradicting $r\notin J$.

Let $B'=B\setminus Y$ and let $T'=T|_{(B')^k}$. For each $j\in J$, choose rows of $T'$ whose restrictions to $(B')^{k-1}$ form a basis of the $j$th flattening row space of $T'$. Their number is $\frank_j(T')$. The vector $v_j$ is nonzero because $v_j(b_j)=T(b)\neq0$, and its support is contained in $Y$. It is therefore linearly independent from those rows: after restricting to $(B')^{k-1}$, the coefficients of the rows of $T'$ must vanish, and then the coefficient of $v_j$ must vanish as well. Hence
\[
\frank_j(T)\geq\frank_j(T')+1.
\]
This holds for every $j\in J$, completing the proof.
\end{proof}

For nonnegative weights $w_1,\ldots,w_k$, define
\begin{equation}\label{eq:weight-constant}
\w(w_1,\ldots,w_k)=
\min_{\substack{J\subseteq[k]\\ |J|\geq2}}
\frac{\sum_{j\in J}w_j}{|J|-1}.
\end{equation}
This statistic affords a weighted version of the theorem of Correia, Sudakov, and Tomon \cite{correia2021flattening}:

\begin{theorem}[Weighted Flattening Rank Theorem]\label{thm:weighted}
Let $k\geq2$. Suppose $T:B^k\to\F$ is semi-diagonal and $w_1,\ldots,w_k\geq0$. Then
\[
\sum_{i=1}^k w_i\frank_i(T)
\geq \w(w_1,\ldots,w_k)|B|.
\]
\end{theorem}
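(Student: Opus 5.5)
We argue by strong induction on $|B|$, repeatedly applying the rank deletion step of \Cref{lem:rank deletion}. Write $\w=\w(w_1,\ldots,w_k)$. The induction hypothesis is the inequality
\[
\sum_{i=1}^k w_i\frank_i(T)\geq \w|B|
\]
for every semi-diagonal tensor on a smaller ground set; the weights and $k$ stay fixed throughout.

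\emph{Base cases.} If $B=\emptyset$, both sides vanish. If $1\leq|B|\leq k-1$, we use that $T$ is nonzero on the diagonal. For each $i$, the rows of the $i$-flattening indexed by $b\in B$ contain the entry $T(b,\ldots,b)\neq0$ in column $(b,\ldots,b)$. Restricting to the diagonal columns gives a diagonal matrix with nonzero diagonal entries, so $\frank_i(T)\geq|B|$. Hence the left side is at least $|B|\sum_i w_i$. Taking $J=[k]$ in \eqref{eq:weight-constant} shows $\w\leq \sum_i w_i/(k-1)\leq\sum_i w_i$, because $k\geq 2$. So the inequality holds. In fact the same diagonal argument works for every $|B|$, and shows $\frank_i(T)\geq |B|$ always, but we only need it when $|B|<k$.

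\emph{Inductive step.} Suppose $|B|\geq k$. Apply \Cref{lem:rank deletion} to obtain $J$ with $|J|\geq2$ and a nonempty $Y$ with $|Y|\leq|J|-1$. Then $T'=T|_{(B\setminus Y)^k}$ satisfies $\frank_j(T)\geq\frank_j(T')+1$ for $j\in J$. For $i\notin J$, restriction cannot increase rank, since $M_i(T')$ is a submatrix of $M_i(T)$; hence $\frank_i(T)\geq\frank_i(T')$. Moreover $T'$ is again semi-diagonal on $B\setminus Y$, because both defining conditions are inherited by restriction. Since $|B\setminus Y|<|B|$, induction gives
\[
\sum_i w_i\frank_i(T)\geq \sum_i w_i\frank_i(T')+\sum_{j\in J}w_j\geq \w(|B|-|Y|)+\sum_{j\in J}w_j .
\]
This step uses $w_j\geq 0$. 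By the definition of $\w$, we have $\sum_{j\in J}w_j\geq \w(|J|-1)\geq \w|Y|$, which closes the induction.

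The only delicate points are confirming that nonnegativity of the weights is what lets us discard the indices outside $J$, and checking the small-$|B|$ regime where \Cref{lem:rank deletion} does not apply. Both are handled above, so I expect no real obstacle. The substantive work is already contained in \Cref{lem:rank deletion}, whose key feature is the matching bound $|Y|\leq|J|-1$.
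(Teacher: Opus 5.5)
Your inductive step is the paper's argument and is fine: you use $w_i\geq0$ to discard the indices outside $J$, and $Y\neq\emptyset$ justifies the strong induction. The gap is in the base case, where the claim $\frank_i(T)\geq|B|$ is false once $k\geq3$.

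Look at the submatrix of $M_i(T)$ on the diagonal columns $(b,\ldots,b)$. Its entry in row $b'\neq b$ is $T(b,\ldots,b,b',b,\ldots,b)$, with $b'$ in position $i$. For $k\geq3$ this tuple has a repeated value, so semi-diagonality says nothing about it. For instance, take $k=3$ and let $T(x_1,x_2,x_3)=1$ if $x_1=x_2$ and $0$ otherwise. This $T$ is semi-diagonal on any $B$, yet every row of $M_3(T)$ is the same vector, so $\frank_3(T)=1$. Your remark that the bound holds for every $|B|$ should have been a warning sign. It would give $\sum_i\frank_i(T)\geq k|B|$, far stronger than the Correia--Sudakov--Tomon bound $\frac{k}{k-1}|B|$. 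Only for $k=2$ is the flattening genuinely a diagonal matrix.

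The fix is what the paper does. All you may use is that row $b$ of $M_i(T)$ contains the nonzero entry $T(b,\ldots,b)$, so $\frank_i(T)\geq1$ whenever $B\neq\emptyset$. Then $\sum_i w_i\frank_i(T)\geq\sum_i w_i\geq(k-1)\w\geq|B|\w$. The last inequality is exactly where the hypothesis $|B|\leq k-1$ is needed; your version never used it. With this replacement the proof is complete and coincides with the paper's.
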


\begin{proof}
Write
\[
R_w(T)=\sum_{i=1}^k w_i\frank_i(T)
\]
and argue by induction on $|B|$. If $B=\emptyset$, the assertion is immediate. If $1\leq|B|\leq k-1$, then every flattening rank is at least one because $T$ is nonzero on the diagonal. Hence
\[
R_w(T)\geq\sum_{i=1}^k w_i.
\]
Taking $J=[k]$ in \labelcref{eq:weight-constant} gives
\[
\sum_{i=1}^k w_i\geq (k-1)\w(w_1,\ldots,w_k)
\geq |B|\w(w_1,\ldots,w_k).
\]

Now suppose $|B|\geq k$. Apply \Cref{lem:rank deletion}, and write $T'=T|_{(B\setminus Y)^k}$. Flattening rank cannot increase under restriction, and for $j\in J$ it drops by at least one. Therefore
\begin{align*}
R_w(T)
&\geq R_w(T')+\sum_{j\in J}w_j\\
&\geq \w(w_1,\ldots,w_k)(|B|-|Y|)
   +\w(w_1,\ldots,w_k)(|J|-1)\\
&\geq \w(w_1,\ldots,w_k)(|B|-|Y|)
   +\w(w_1,\ldots,w_k)|Y|\\
&=\w(w_1,\ldots,w_k)|B|.
\end{align*}
The second line uses the inductive hypothesis and the definition of $\w$; the third uses $|Y|\leq|J|-1$.
\end{proof}

Setting all weights equal to one gives the theorem of Correia, Sudakov, and Tomon. The specialization needed here is stronger than the maximum-flattening rank consequence.

\begin{corollary}\label{cor:frank}
Let $k\geq2$ and suppose $T:B^k\to\F$ is semi-diagonal. For every $s\in[k]$,
\[
\sum_{i\neq s}\frank_i(T)\geq |B|.
\]
\end{corollary}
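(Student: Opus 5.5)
The plan is to apply \Cref{thm:weighted} with the weight vector that omits the $s$th flattening, namely $w_i=1$ for $i\neq s$ and $w_s=0$. The left-hand side of \Cref{thm:weighted} then becomes exactly $\sum_{i\neq s}\frank_i(T)$. What remains is to show that $\w(w_1,\ldots,w_k)\geq1$ for this choice of weights, since then the weighted theorem gives $\sum_{i\neq s}\frank_i(T)\geq\w(w_1,\ldots,w_k)|B|\geq|B|$.

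To bound $\w$, I will take an arbitrary $J\subseteq[k]$ with $|J|\geq2$ and split into two cases according to whether $s\in J$.
\begin{itemize}
\item If $s\notin J$, then $\sum_{j\in J}w_j=|J|$, and
\[
\frac{|J|}{|J|-1}>1 .
\]
\item If $s\in J$, then $\sum_{j\in J}w_j=|J|-1$, and the ratio equals exactly $1$.
\end{itemize}
In both cases the quotient in \labelcref{eq:weight-constant} is at least $1$. Taking the minimum over $J$ gives $\w\geq1$; in fact $\w=1$, because any $J$ containing $s$ attains the value $1$.

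There is no real obstacle once \Cref{thm:weighted} is available; the only point needing care is that the weights are allowed to vanish. \Cref{thm:weighted} is stated for nonnegative weights, and its proof uses only nonnegativity: it needs $R_w$ to be monotone under restriction and the bound $\sum_i w_i\geq(k-1)\w$ in the base case. Both hold with $w_s=0$.
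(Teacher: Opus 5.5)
Your proposal is correct and follows the paper's own argument: set $w_s=0$ and $w_i=1$ for $i\neq s$, check that $\sum_{j\in J}w_j\geq|J|-1$ for every $J\subseteq[k]$ with $|J|\geq2$ so that $\w(w_1,\ldots,w_k)\geq1$, and then apply \Cref{thm:weighted}. Your case split on whether $s\in J$, and your observation that in fact $\w=1$, simply spell out that one-line verification in more detail.
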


\begin{proof}
Set $w_s=0$ and $w_i=1$ for $i\neq s$. For every $J\subseteq[k]$ with $|J|\geq2$,
\[
\sum_{j\in J}w_j\geq |J|-1.
\]
Thus $\w(w_1,\ldots,w_k)\geq1$, and \Cref{thm:weighted} gives the result.
\end{proof}

\section{The finite graph-copy rank estimates}\label{section:finite-rank}

Fix a graph $H$ on vertex set $[h]$ and with at least one edge, a positive integer $m$, and an odd prime $p>h$. Let $S\subseteq\{0,1,\ldots,\ell\}^n$ be a finite set for which there are a $c\in\{0,1,\ldots,\ell\}^n$ and a constant $R$ such that $\norm{x-c}_2^2=R$ for every $x\in S$. Recall
\[
d_{\max}(S)=\max_{x,y\in S}\frac12\norm{x-y}_2^2
\]
and suppose
\begin{equation}\label{eq:prime-condition}
d_{\max}(S)<(m+1)p.
\end{equation}
Let $B\subseteq S$ contain no copy of $H$ whose edge lengths lie in $A_{m,p}$ from \labelcref{eq:amp}. Define
\begin{equation}\label{eq:graph-tensor}
T_H(x_1,\ldots,x_h)
=\prod_{ij\in E(H)}D_p(x_i,x_j)
=\prod_{ij\in E(H)}
\left(1-\left(\frac12\norm{x_i-x_j}_2^2\right)^{p-1}\right)
\end{equation}
as a function $B^h\to\F_p$. Also define
\begin{equation}\label{eq:diagonalized-tensor}
J_H=I_hT_H.
\end{equation}

\begin{lemma}\label{lem:semidiagonal}
The tensor $T_H$ is semi-diagonal on $B^h$. Moreover, $J_H$ is supported exactly on the full diagonal of $B^h$, where it has the nonzero value $(-1)^h(h-1)!$.
\end{lemma}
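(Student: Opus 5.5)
The plan is to verify the two defining conditions of semi-diagonality for $T_H$ directly from the values of the detector $D_p$, and then combine them with \Cref{lem:distinctness}.

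\textbf{Diagonal values.} On the full diagonal $(x,\ldots,x)$, every factor of $T_H$ is $D_p(x,x)$. Since $\tfrac12\norm{x-x}_2^2=0$, each factor equals $1$. Hence $T_H(x,\ldots,x)=1\neq0$ for every $x\in B$; this uses that $H$ has at least one edge only to make the product nonempty, and an empty product would be $1$ anyway.

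\textbf{All-distinct tuples.} Take $(x_1,\ldots,x_h)\in B^h$ with all coordinates distinct, and suppose $T_H(x_1,\ldots,x_h)\neq0$. Because $D_p$ takes only the values $0$ and $1$, every factor must equal $1$. So for each edge $ij\in E(H)$ we have $\tfrac12\norm{x_i-x_j}_2^2\equiv0\pmod p$. By \labelcref{eq:half-distance} this quantity is an integer, and it lies in $[0,d_{\max}(S)]$. By \labelcref{eq:prime-condition} it therefore belongs to $\{0,p,\ldots,mp\}$. It is nonzero since $x_i\neq x_j$, so $\norm{x_i-x_j}_2\in A_{m,p}$. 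Then $i\mapsto x_i$ is an injective map $V(H)\to B$ with all edge lengths in $A_{m,p}$, i.e.\ a forbidden copy of $H$ in $B$. This contradicts the choice of $B$, so $T_H$ vanishes on all-distinct tuples, and $T_H$ is semi-diagonal.

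\textbf{Support of $J_H$.} By \Cref{lem:distinctness}, $I_h$ vanishes on every tuple that is neither all-distinct nor fully diagonal, and $T_H$ vanishes on the all-distinct tuples. Hence $J_H=I_hT_H$ vanishes off the full diagonal. On the diagonal, $J_H=(-1)^h(h-1)!\cdot 1$. This is nonzero in $\F_p$ because $p>h$ means $p\nmid (h-1)!$.

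The only step needing care is the translation from ``$\equiv0\pmod p$'' to membership in $A_{m,p}$. This requires integrality of the half squared distance and the bound $d_{\max}(S)<(m+1)p$, both already established.
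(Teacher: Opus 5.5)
Your proof is correct and follows the same route as the paper. You check that $T_H=1$ on the diagonal, use the prime condition to turn a nonvanishing product on an all-distinct tuple into a forbidden copy of $H$ in $B$, and then combine this with the three cases of \Cref{lem:distinctness}, using $p>h$ for the diagonal value. You also make explicit that the half squared distance is a nonzero integer in $[0,d_{\max}(S)]$, a step the paper leaves to the discussion after \labelcref{eq:detector}.
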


\begin{proof}
If $x_1=\cdots=x_h=x$, then every edge factor is $D_p(x,x)=1$, so
\[
T_H(x,\ldots,x)=1.
\]
Now suppose that $x_1,\ldots,x_h$ are all distinct. If every factor in \labelcref{eq:graph-tensor} were one, then \labelcref{eq:prime-condition} and the discussion following \labelcref{eq:detector} would imply
\[
\norm{x_i-x_j}_2\in A_{m,p}
\]
for every edge $ij\in E(H)$. The injective map $i\mapsto x_i$ would then be a copy of $H$ in $B$, contrary to the choice of $B$. At least one factor is therefore zero, so $T_H$ vanishes on every all-distinct tuple. This proves semidiagonality.

There are now three equality patterns to consider for $J_H=I_hT_H$. On the full diagonal, \Cref{lem:distinctness} gives $I_h=(-1)^h(h-1)!$, and $T_H=1$. This value is nonzero in $\F_p$ because $p>h$. On an all-distinct tuple, $I_h=1$ but $T_H=0$. On a tuple that is neither all distinct nor fully diagonal, $I_h=0$. Hence $J_H$ vanishes everywhere except on the full diagonal.
\end{proof}

The next two propositions upper-bound the size of $B$ in different ways. The first uses the semi-diagonal tensor $T_H$ and flattening rank. The second uses the diagonal tensor $J_H$ and partition rank.

\begin{proposition}[Flattening rank estimate]\label{prop:flattening}
Let the degree sequence of $H$ be
\[
d_1(H)\geq d_2(H)\geq\cdots\geq d_h(H).
\]
Then
\[
|B|\leq (h-1)|M_{\ell,n}((p-1)d_2(H))|.
\]
\end{proposition}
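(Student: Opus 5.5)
We apply \Cref{cor:frank} to the semi-diagonal tensor $T_H$ from \Cref{lem:semidiagonal}. We take $s$ to be a vertex of maximum degree $d_1(H)$, which gives
\[
|B|\leq\sum_{i\neq s}\frank_i(T_H).
\]
There are $h-1$ summands, and each vertex $i\neq s$ has degree $\deg_H(i)\leq d_2(H)$. It therefore suffices to show that
\[
\frank_i(T_H)\leq |M_{\ell,n}((p-1)\deg_H(i))| \qquad\text{for every } i.
\]
This bound is monotone in the degree, so it is at most $|M_{\ell,n}((p-1)d_2(H))|$, and the proposition follows.

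To bound $\frank_i(T_H)$, we separate the variable $x_i$ from the others. The factors of \labelcref{eq:graph-tensor} that do not involve $x_i$ depend only on $(x_j)_{j\neq i}$, so they can be absorbed into the $g_r$ side of the flattening decomposition. The remaining product is
\[
\prod_{j:\,ij\in E(H)}D_p(x_i,x_j).
\]
On $S$ we use \labelcref{eq:half-distance}, which gives $\frac12\norm{x_i-x_j}_2^2=R-\ip{x_i-c}{x_j-c}$. For fixed $x_j$ this is a polynomial in $x_i$ of degree $1$, so each factor $D_p(x_i,x_j)$ is a polynomial of degree at most $p-1$ in the coordinates of $x_i$. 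The product over the $\deg_H(i)$ neighbours is then a polynomial in $x_i$ of total degree at most $(p-1)\deg_H(i)$, and its coefficients are polynomials in $(x_j)_{j\neq i}$ (and $c$, $R$).

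Expanding this product in monomials $x_i^\alpha$ writes $T_H$ as
\[
\sum_\alpha x_i^{\alpha}\, g_\alpha\bigl((x_j)_{j\neq i}\bigr).
\]
We then apply the reduction of \Cref{lem:monomial-count}: on the box $\{0,\dots,\ell\}^n$, each monomial in $x_i$ can be replaced by a combination of monomials with $\alpha\in M_{\ell,n}((p-1)\deg_H(i))$. This reduction does not increase the total degree, and it leaves the function on $B\subseteq S$ unchanged. Equivalently, the row space of $M_i(T_H)$ lies in the span of the restrictions of these monomials to $B$. This gives the claimed bound on $\frank_i(T_H)$.

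The step needing care is the degree count. We must use the constant-norm property of $S$ so that each edge factor contributes degree $p-1$, not $2(p-1)$, in $x_i$. Without that property $\norm{x_i-x_j}^2$ is quadratic in $x_i$ and the bound would double. Everything else is bookkeeping.
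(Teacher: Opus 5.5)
Your proof is correct and follows the paper's argument essentially step for step: omit a maximum-degree vertex via \Cref{cor:frank}, use the constant-norm identity \labelcref{eq:half-distance} so that each incident edge factor has degree at most $p-1$ in $x_i$, and reduce with \Cref{lem:monomial-count} to get $\frank_i(T_H)\leq|M_{\ell,n}((p-1)\deg_H(i))|$. One small slip: under the paper's convention that the rows of $M_i(T_H)$ are indexed by $x_i$, it is the column space, not the row space, that lies in the span of the monomials $x_i^\alpha$ restricted to $B$; this gives the same rank bound.
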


\begin{proof}
Relabel the vertices of $H$, if necessary, so that vertex $i$ has degree $d_i(H)$, and set $s=1$. By \Cref{lem:semidiagonal,cor:frank},
\begin{equation}\label{eq:frank-sum}
|B|\leq\sum_{i\neq s}\frank_i(T_H).
\end{equation}
We bound each term on the right.

Fix $i\in[h]$. Since $\norm{x-c}_2^2=R$ for every $x\in S$, \labelcref{eq:half-distance} rewrites the tensor as
\[
T_H(x_1,\ldots,x_h)=
\prod_{uv\in E(H)}
\left(1-( R-\ip{x_u-c}{x_v-c})^{p-1}\right).
\]
Only the $d_i(H)$ edge factors incident with vertex $i$ involve $x_i$. Each such factor has degree at most $p-1$ in the coordinates of $x_i$. Thus $T_H$, viewed as a polynomial in the coordinates of $x_i$ with all other variables held together, has total $x_i$-degree at most
\[
(p-1)d_i(H).
\]
By \Cref{lem:monomial-count}, it can be written on $B^h$ as
\[
T_H(x_1,\ldots,x_h)
=\sum_{\alpha\in M_{\ell,n}((p-1)d_i(H))}
x_i^\alpha g_\alpha((x_j)_{j\neq i}).
\]
Each summand has $i$-flattening rank one, and therefore
\begin{equation}\label{eq:individual-frank}
\frank_i(T_H)
\leq |M_{\ell,n}((p-1)d_i(H))|.
\end{equation}

For every $i\neq s$, we have $d_i(H)\leq d_2(H)$. The sets $M_{\ell,n}(L)$ are increasing with $L$, so \labelcref{eq:frank-sum,eq:individual-frank} give
\[
|B|
\leq\sum_{i\neq s}|M_{\ell,n}((p-1)d_i(H))|
\leq(h-1)|M_{\ell,n}((p-1)d_2(H))|.
\]
\end{proof}

The omission of the maximum degree vertex in this proof is the precise reason that $d_2(H)$, rather than the maximum degree $d_1(H)$, appears in the theorem. No ordering of the flattening ranks is needed; the omitted index is chosen from the graph before any ranks are estimated.

\begin{proposition}[Partition rank estimate]\label{prop:partition}
For every finite simple graph $H$,
\[
|B|\leq (2^h-2)|M_{\ell,n}((p-1)\lambda(H))|.
\]
\end{proposition}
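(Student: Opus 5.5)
Since $J_H$ is diagonal with nonzero diagonal entries (\Cref{lem:semidiagonal}), \labelcref{eq:diagonal-rank} gives $\prank(J_H)=|B|$. So it suffices to write $J_H$ as a sum of at most $(2^h-2)|M_{\ell,n}((p-1)\lambda(H))|$ partition rank one tensors. Expanding with \labelcref{eq:ik},
\[
J_H=\sum_{\substack{\pi\in\Pi_{[h]}\\ \pi\neq\hat 1}}\mu(\hat 0,\pi)\,\delta_\pi T_H ,
\]
so I will bound $\prank(\delta_\pi T_H)$ separately for each non-maximal partition $\pi$ and add up. The number $2^h-2$ is the number of nonempty proper subsets of $[h]$. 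I therefore expect the count to go as follows: each term $\delta_\pi T_H$ is split into pieces, each piece is a product of a function of $x_Q$ and a function of $x_{[h]\setminus Q}$ for some nonempty proper $Q\subset[h]$, and pieces with the same $Q$ coming from different $\pi$ are grouped together. Grouping by $Q$ caps the total at $(2^h-2)$ times a monomial count, rather than the Bell number of $[h]$ times it.

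Fix $\pi=P_1|\cdots|P_r$ with $r\geq2$. On the support of $\delta_\pi$, write $y_j$ for the common value on $P_j$. By the discussion in the subsection on quotient graphs:
\begin{itemize}[leftmargin=*]
\item edges inside a block contribute $D_p(y,y)=1$;
\item parallel edges between two blocks collapse, because $D_p^r=D_p$.
\end{itemize}
Hence
\[
\delta_\pi T_H=\delta_\pi\cdot\prod_{jj'\in E(H/\pi)}D_p(y_j,y_{j'}).
\]
The quotient $H/\pi$ has minimum degree at most $\lambda(H)$, so choose a block $P_{j_0}$ of degree $\deg_{H/\pi}(P_{j_0})\leq\lambda(H)$. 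Using \labelcref{eq:half-distance}, each edge factor at $P_{j_0}$ has degree at most $p-1$ in the coordinates of $y_{j_0}$. So the product, as a polynomial in $y_{j_0}$, has degree at most $(p-1)\lambda(H)$.

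Apply \Cref{lem:monomial-count} in the variable $y_{j_0}$, taking $y_{j_0}=x_i$ for any fixed $i\in P_{j_0}$. This gives
\[
\delta_\pi T_H=\sum_{\alpha\in M_{\ell,n}((p-1)\lambda(H))}\Big(\delta_{P_{j_0}}(x_{P_{j_0}})\,x_i^\alpha\Big)\cdot\Big(\prod_{j\neq j_0}\delta_{P_j}(x_{P_j})\;g_\alpha\big((y_j)_{j\neq j_0}\big)\Big).
\]
The factor $\delta_\pi$ splits as a product of block indicators, so each summand separates the variables $x_{P_{j_0}}$ from $x_{[h]\setminus P_{j_0}}$. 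Each summand is therefore partition rank one for the two-block partition $Q|([h]\setminus Q)$ with $Q=P_{j_0}$, a nonempty proper subset.

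Now group over $\pi$ by $Q$. The first factors $\delta_Q(x_Q)x_i^\alpha$ depend only on $Q$ and $\alpha$, not on $\pi$. So for fixed $(Q,\alpha)$, the sum over all $\pi$ with chosen block $Q$ of $\mu(\hat 0,\pi)$ times these summands is a single tensor of the form $\delta_Q(x_Q)x_i^\alpha\cdot G_{Q,\alpha}(x_{[h]\setminus Q})$. This is one partition rank one tensor for each pair $(Q,\alpha)$, which gives
\[
|B|=\prank(J_H)\leq(2^h-2)\,|M_{\ell,n}((p-1)\lambda(H))|.
\]

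The main obstacle is making this grouping legitimate. Three points need care:
\begin{itemize}[leftmargin=*]
\item the representative $x_i^\alpha$ must be chosen canonically from $Q$ alone, say with $i=\min Q$;
\item the reduction of \Cref{lem:monomial-count} must be done with the same monomial basis for every $\pi$;
\item the degree bound $(p-1)\lambda(H)$ must hold for every $\pi$ contributing to $Q$, which it does because $\lambda(H)$ bounds $\delta(H/\pi)$ for all nontrivial $\pi$.
\end{itemize}
If $H$ is disconnected this still works, since $\lambda$ is defined for $H$ itself, so no envelope is needed here.
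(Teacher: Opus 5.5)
Your proposal is correct and follows the paper's proof step for step: the diagonal lower bound $|B|=\prank(J_H|_{B^h})$, the M\"obius expansion of $I_h$, the collapse of $T_H$ to the simple quotient $H/\pi$ on the support of $\delta_\pi$, the expansion in the representative of a minimum-degree block via \Cref{lem:monomial-count}, and the grouping of terms by the selected block $Q$ with canonical representative $x_{\min Q}$ to obtain the factor $2^h-2$. Your remark that no envelope is needed here also matches the paper, where $\hlam$ enters only in the proof of \Cref{thm:main}.
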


\begin{proof}
By \Cref{lem:semidiagonal}, the restriction of $J_H$ to $B^h$ is a nonzero scalar multiple of the diagonal tensor. Hence \labelcref{eq:diagonal-rank} gives
\begin{equation}\label{eq:partition-lower}
|B|=\prank(J_H|_{B^h}).
\end{equation}
We construct a partition rank decomposition of $J_H$.

Expand $I_h$ using \labelcref{eq:ik}:
\begin{equation}\label{eq:jh-expansion}
J_H(x_1,\ldots,x_h)=
\sum_{\substack{\pi\in\Pi_{[h]}\\ \pi\neq\hat 1}}
\mu(\hat0,\pi)\delta_\pi(x_1,\ldots,x_h)T_H(x_1,\ldots,x_h).
\end{equation}
Fix one partition $\pi=P_1|\cdots|P_r$ in this sum. Since $\pi\neq\hat1$, we have $r\geq2$. Choose a representative $a_j=\min P_j$ from each block and set $y_j=x_{a_j}$. On the support of $\delta_\pi$, all variables in $P_j$ equal $y_j$.

If an edge of $H$ has both endpoints in $P_j$, its detector becomes $D_p(y_j,y_j)=1$. If several edges run between $P_j$ and $P_k$, their detectors are equal and their product is the same detector because $D_p$ takes only the values zero and one. Consequently,
\begin{equation}\label{eq:quotient-function}
T_H(x_1,\ldots,x_h)=
\prod_{jk\in E(H/\pi)}D_p(y_j,y_k)
\end{equation}
as a function on the support of $\delta_\pi$. This is the point at which the simple quotient graph, rather than a multigraph, enters the proof.

Choose a block $P_s$ corresponding to a minimum-degree vertex of $H/\pi$. By \labelcref{eq:half-distance}, the right-hand side of \labelcref{eq:quotient-function}, viewed as a function on $B^h$, can be represented by a polynomial in $y_s$ of degree at most
\[
(p-1)\delta(H/\pi)\leq(p-1)\lambda(H).
\]
By \Cref{lem:monomial-count}, the summand of \labelcref{eq:jh-expansion} belonging to $\pi$ can be written as a sum over at most
\[
|M_{\ell,n}((p-1)\lambda(H))|
\]
monomials in $y_s$. More explicitly, it has the form
\[
\sum_{\alpha\in M_{\ell,n}((p-1)\lambda(H))}
\left(\delta_{P_s}(x_{P_s})x_{a_s}^{\alpha}\right)
G_{\pi,\alpha}(x_{[h]\setminus P_s}),
\]
where $\delta_{P_s}$ enforces equality among the variables indexed by $P_s$, and all other equality conditions and polynomial factors are absorbed into $G_{\pi,\alpha}$. Every displayed term has partition rank one across the nontrivial split
\[
P_s\mid([h]\setminus P_s).
\]

For a fixed nonempty proper subset $P\subset[h]$ and a fixed monomial $x_{\min P}^{\alpha}$, sum the functions $G_{\pi,\alpha}$ over all partitions $\pi$ for which the selected minimum-degree block is $P$. This sum is still an arbitrary function of the complementary variables and therefore still gives one partition rank one term. There are $2^h-2$ choices for $P$ and at most $|M_{\ell,n}((p-1)\lambda(H))|$ choices for $\alpha$. Hence
\[
\prank(J_H|_{B^h})
\leq(2^h-2)|M_{\ell,n}((p-1)\lambda(H))|.
\]
Combining this with \labelcref{eq:partition-lower} proves the proposition.
\end{proof}

\section{From finite rank bounds to Euclidean colorings}\label{section:transfer}

We first handle a graph-theoretic point needed for disconnected $H$. If $sF$ denotes the disjoint union of $s$ copies of a graph $F$, then avoiding $sF$ has the same Euclidean chromatic cost as avoiding one copy of $F$. The relevant point is that an $sF$-free color class need not contain only finitely many copies of $F$, but its family of $F$-copies has matching number at most $s-1$.

\begin{proposition}\label{prop:deletion}
Let $F$ be a finite simple graph with at least one edge, let $A\subset\R_{>0}$ be finite, and let $Z\subset\R^n$ be finite. Then
\[
\chi_F(\R^n;A)=\chi_F(\R^n\setminus Z;A).
\]
Consequently, for every positive integer $s$,
\[
\chi_{sF}(\R^n;A)=\chi_F(\R^n;A).
\]
\end{proposition}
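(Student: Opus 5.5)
The plan is to prove the first identity by a compactness and translation argument, and then derive the second from it by deleting a finite set of points.

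For the first identity, the inequality $\chi_F(\R^n\setminus Z;A)\le\chi_F(\R^n;A)$ is immediate, since restricting a coloring of $\R^n$ to $\R^n\setminus Z$ preserves the absence of monochromatic copies. For the reverse inequality, we may assume $k=\chi_F(\R^n\setminus Z;A)$ is finite, and fix a $k$-coloring $c$ of $\R^n\setminus Z$ with no monochromatic copy of $F$ in $G_A$.

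\emph{Finite subsets.} Let $X\subset\R^n$ be finite. The set of translation vectors $t$ with $(X+t)\cap Z\neq\emptyset$ is $\{z-x: z\in Z,\ x\in X\}$, which is finite, so we can choose $t$ with $X+t\subseteq\R^n\setminus Z$. Define the coloring $x\mapsto c(x+t)$ on $X$. Translation preserves all distances and injectivity, so a monochromatic copy of $F$ in $X$ would translate to a monochromatic copy in $\R^n\setminus Z$. Hence every finite subset of $\R^n$ admits a good $k$-coloring.

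\emph{Compactness.} Consider the hypergraph on $\R^n$ whose edges are the images of copies of $F$; each edge has at most $|V(F)|$ vertices, so all edges are finite. The de Bruijn--Erd\H{o}s compactness principle (equivalently, Tychonoff's theorem applied to $[k]^{\R^n}$, where each constraint ``this copy is not monochromatic'' is closed) then yields a $k$-coloring of all of $\R^n$ with no monochromatic copy of $F$. Therefore $\chi_F(\R^n;A)\le k$.

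For the second identity, first note that $\chi_{sF}(\R^n;A)\le\chi_F(\R^n;A)$. Indeed, a copy of $sF$ is an injective map of the disjoint union, so any monochromatic copy of $sF$ contains a monochromatic copy of $F$. For the converse, we may assume $k=\chi_{sF}(\R^n;A)<\infty$ and fix a $k$-coloring $c$ of $\R^n$ with no monochromatic copy of $sF$.

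\emph{Choosing $Z$.} For each color, choose a maximal family of pairwise vertex-disjoint monochromatic copies of $F$ in that color. Such a family has at most $s-1$ members, since $s$ of them would together form an injective, monochromatic copy of $sF$. Let $Z$ be the union of the vertex images of all these copies over all colors; thus $Z$ is finite, with $|Z|\le k(s-1)|V(F)|$.

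\emph{Concluding.} Any monochromatic copy of $F$ inside $\R^n\setminus Z$ would be vertex-disjoint from the chosen family of its color, contradicting maximality. So $c$ restricted to $\R^n\setminus Z$ witnesses $\chi_F(\R^n\setminus Z;A)\le k$, and the first part gives $\chi_F(\R^n;A)\le k$. The infinite cases are trivial, since both sides are then at least as large as any finite value.

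The main obstacle is the reverse inequality in the first part, namely extending from $\R^n\setminus Z$ to all of $\R^n$. One cannot simply color $Z$ cleverly, because points of $Z$ could complete copies of $F$ with arbitrary points outside $Z$. The translation trick handles this only at the level of finite configurations, so the compactness step is genuinely needed.
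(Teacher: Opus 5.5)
Your proof is correct and follows the paper's argument. The first identity comes from de Bruijn--Erd\H{o}s compactness combined with translating finite configurations off the finite set $Z$; you translate every finite $X\subset\R^n$ and then pass to $\R^n$ by compactness, while the paper first extracts a finite witness $W$ with $\chi_F(W;A)=\chi_F(\R^n;A)$ and translates it, which is the same reasoning in the other direction. The second identity comes, as in the paper, from deleting the vertices of a maximal family of at most $s-1$ pairwise disjoint monochromatic copies of $F$ in each color and then applying the first part.
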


\begin{proof}
For $X\subseteq\R^n$, let $\cG_A^{(F)}(X)$ be the $|V(F)|$-uniform hypergraph whose vertices are the points of $X$ and whose hyperedges are the point sets that support copies of $F$ with edge lengths in $A$. A proper coloring of this hypergraph is exactly a coloring of $X$ with no monochromatic copy of $F$.

The relevant chromatic numbers are finite: a finite distance graph in $\R^n$ admits a periodic coloring by sufficiently small cubes, and a proper coloring of that distance graph avoids every graph $F$ with an edge. The inequality
\[
\chi_F(\R^n\setminus Z;A)\leq\chi_F(\R^n;A)
\]
follows by restriction. For the reverse inequality, the de Bruijn-Erd\H{o}s compactness theorem, in its standard hypergraph form, gives a finite set $W\subset\R^n$ such that
\[
\chi_F(W;A)=\chi_F(\R^n;A)
\]
\cite{debruijnerdos1951}. The set of translations $t$ for which $(W+t)\cap Z\neq\emptyset$ is finite, so choose any other $t\in\R^n$. Then $W+t\subset\R^n\setminus Z$, and translation preserves every distance. Therefore
\[
\chi_F(\R^n\setminus Z;A)
\geq\chi_F(W+t;A)
=\chi_F(W;A)
=\chi_F(\R^n;A).
\]

It remains to prove the assertion about $sF$. Since every copy of $sF$ contains a copy of $F$, every coloring that avoids $F$ also avoids $sF$. Hence
\[
\chi_{sF}(\R^n;A)\leq\chi_F(\R^n;A).
\]
For the reverse inequality, let $k=\chi_{sF}(\R^n;A)$ and choose a $k$-coloring with no monochromatic copy of $sF$. Fix one color. In the hypergraph of monochromatic copies of $F$, there are no $s$ pairwise vertex-disjoint hyperedges. Choose a maximal matching. It has at most $s-1$ hyperedges, and the union of its vertices meets every monochromatic copy of $F$ of that color; otherwise another disjoint hyperedge could be added. Doing this for each of the finitely many colors gives a finite set $Z$ meeting every monochromatic copy of $F$. The restricted coloring of $\R^n\setminus Z$ avoids $F$, so the first part of the proposition gives
\[
\chi_F(\R^n;A)
=\chi_F(\R^n\setminus Z;A)
\leq k
=\chi_{sF}(\R^n;A).
\]
\end{proof}

The following proposition isolates and simplifies the asymptotic transfer used in the proof of Naslund's multiple-distance clique bound \cite[Theorem~2]{naslund2022chromatic}. In Naslund's proof, the corresponding transfer is carried out through the constant composition and truncated theta-function arguments in \cite[Theorems~3 and~4]{naslund2022chromatic}.

\begin{proposition}\label{prop:asymptotic}
Fix a finite simple graph $H$ with at least one edge and positive integers $m$ and $q$. Suppose that there is a constant $C_H>0$, depending only on $H$, such that the following holds. For every $\ell,n\geq1$, every set $S\subseteq\{0,1,\ldots,\ell\}^n$ for which there are a $c\in\{0,1,\ldots,\ell\}^n$ and a constant $R$ such that
\[
\norm{x-c}_2^2=R
\]
for every $x\in S$, every prime $p>|V(H)|$ satisfying $d_{\max}(S)<(m+1)p$, and every $H$-free set $B\subseteq S$ in the distance graph with distance set $A_{m,p}$, one has
\[
|B|\leq C_H|M_{\ell,n}((p-1)q)|.
\]
Let $(\ell_n)$ be any sequence of positive integers and, for each $n$, write $\ell=\ell_n$ and let $S=S_n$ be such a set with $d_{\max}(S)=O(n)$. Then, for every fixed $t\in(0,1)$,
\begin{equation}\label{eq:theta-transfer}
\chi_H(\R^n;m)\geq
|S|\left(
\frac{t^{\frac{q d_{\max}(S)}{n(m+1)}}}
{1+t+\dots+t^\ell}
+o(1)
\right)^n.
\end{equation}
Consequently,
\begin{equation}\label{eq:gamma-transfer}
\chi_H(\R^n;m)\geq
\left(
\Gamma_{\chi}\sqrt{\frac{m+1}{q}}+o(1)
\right)^n.
\end{equation}
\end{proposition}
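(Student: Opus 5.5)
The plan is to derive \labelcref{eq:theta-transfer} by restricting an arbitrary coloring of $\R^n$ to the finite set $S$. After that, \labelcref{eq:gamma-transfer} follows by feeding in the set from \Cref{lem:size_of_S} and optimizing the parameters.

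For \labelcref{eq:theta-transfer}, fix $n$ and let $p$ be the least prime exceeding $\max\{d_{\max}(S)/(m+1),\,|V(H)|\}$. Then $p>|V(H)|$ and $d_{\max}(S)<(m+1)p$. Since $d_{\max}(S)=O(n)$, the prime number theorem (or Bertrand plus known gap bounds) gives
\[
p-1\leq \frac{d_{\max}(S)}{m+1}+o(n).
\]
Any coloring of $\R^n$ with no monochromatic copy of $H$ whose edge lengths lie in $A_{m,p}$ restricts to a coloring of $S\subset\R^n$. Each color class $B$ is then an $H$-free subset of $S$ for the distance set $A_{m,p}$, which has exactly $m$ elements. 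By hypothesis $|B|\leq C_H|M_{\ell,n}((p-1)q)|$, so
\[
\chi_H(\R^n;m)\geq\chi_H(\R^n;A_{m,p})\geq |S|\big/\bigl(C_H|M_{\ell,n}((p-1)q)|\bigr).
\]

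Next I bound the monomial count with the usual generating-function trick. For $t\in(0,1)$ and $L=(p-1)q$,
\[
|M_{\ell,n}(L)|\leq\sum_{\alpha\in\{0,\ldots,\ell\}^n}t^{|\alpha|-L}=t^{-L}(1+t+\cdots+t^\ell)^n .
\]
Because $t$ is fixed and $L\leq qd_{\max}(S)/(m+1)+o(n)$, we get
\[
t^{-L}\leq\Bigl(t^{-\frac{qd_{\max}(S)}{n(m+1)}}+o(1)\Bigr)^n,
\]
where the $o(1)$ is uniform because the exponent $qd_{\max}(S)/(n(m+1))$ is bounded. The constant $C_H$ contributes only a factor $C_H^{1/n}\to1$ inside the $n$th power. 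Finally, $1/(1+t+\cdots+t^\ell)$ lies in $[1-t,1]$, so the additive $o(1)$ can be moved inside the parentheses as in \labelcref{eq:theta-transfer}.

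For \labelcref{eq:gamma-transfer}, fix $\beta>0$ and $s>0$, and put $d_n=\lceil\beta\sqrt n\,\rceil$, $\ell=2d_n$ and $t=e^{-s}$. Let $S$ be the set from \Cref{lem:size_of_S}. Since $S$ has diameter at most $d_n$, we have $d_{\max}(S)\leq d_n^2/2=(\beta^2/2+o(1))n$, which is $O(n)$. The quantity $t^{qd_{\max}/(n(m+1))}$ is decreasing in $d_{\max}$, so this upper bound may be used. We also have $1+t+\cdots+t^\ell\leq 1/(1-t)$. Combining these with $|S|\geq(\beta\sqrt{e\pi/2}+o(1))^n$ gives
\[
\chi_H(\R^n;m)\geq\Bigl(\beta\sqrt{\tfrac{e\pi}{2}}\,(1-e^{-s})\,e^{-\frac{sq\beta^2}{2(m+1)}}+o(1)\Bigr)^n .
\]
Now write $\beta^2=\frac{m+1}{q}y$. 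The base becomes
\[
\sqrt{\tfrac{m+1}{q}}\,\sqrt{\tfrac{e\pi}{2}}\,\sqrt y\,e^{-sy/2}(1-e^{-s}).
\]
Maximizing over $y$ gives $y=1/s$ and the value $\sqrt{\tfrac{m+1}{q}}\sqrt{\pi/2}\,(1-e^{-s})/\sqrt s$. Taking the supremum over $s>0$ yields $\Gamma_\chi\sqrt{(m+1)/q}$.

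The main obstacle I expect is the bookkeeping of the $o(1)$ terms. The prime-gap error in $p$ must be $o(n)$ in the exponent $L$. The rounding of $d_n$ must be absorbed. Passing from a fixed near-optimal pair $(\beta,s)$ to the exact maximum in $\Gamma_\chi$ requires choosing the pair first, within $\varepsilon$ of optimal, and then letting $n\to\infty$ and $\varepsilon\to0$.
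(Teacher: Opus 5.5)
Your proposal is correct and follows the paper's proof essentially step for step. You take the least prime above $\max\{d_{\max}(S)/(m+1),|V(H)|\}$, count colors against $C_H|M_{\ell,n}((p-1)q)|$, bound the monomials by the generating function, and then use the set of \Cref{lem:size_of_S} with the same two-parameter optimization (your $y=1/s$ is the paper's $a=(2x)^{-1}$). The only differences are cosmetic: you observe that the prime number theorem already gives the needed $o(n)$ prime gap where the paper cites Baker--Harman--Pintz, and you use $A_{m,p}$ directly as an $m$-element distance set instead of dilating to the fixed set $\{1,\sqrt2,\ldots,\sqrt m\}$.
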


\begin{proof}
Fix $t\in(0,1)$, and write $\ell=\ell_n$ and $S=S_n$. Let $p$ be the smallest prime satisfying
\[
p>\max\left\{|V(H)|,\frac{d_{\max}(S)}{m+1}\right\}.
\]
This choice ensures that the assumed finite bound applies.

Suppose that $S$ is colored without a monochromatic copy of $H$ whose edge lengths lie in $A_{m,p}$. Every color class is then an $H$-free subset of $S$ and consequently has size at most $C_H|M_{\ell,n}((p-1)q)|$. Hence the number of colors is at least
\begin{equation}\label{eq:finite-color-lower}
\frac{|S|}{C_H|M_{\ell,n}((p-1)q)|}.
\end{equation}

We next estimate the number of monomials in the denominator. If $\alpha\in M_{\ell,n}((p-1)q)$, then $|\alpha|\leq(p-1)q$, and therefore $t^{|\alpha|-(p-1)q}\geq1$. Summing over these vectors and then enlarging the sum to the entire box $\{0,1,\ldots,\ell\}^n$ gives
\begin{equation}\label{eq:monomial-gf}
|M_{\ell,n}((p-1)q)|
\leq
\frac{(1+t+\cdots+t^\ell)^n}{t^{(p-1)q}}.
\end{equation}

Dilating $S$ by $(2p)^{-1/2}$ changes $A_{m,p}$ into the fixed distance set $A_m=\{1,\sqrt2,\ldots,\sqrt m\}$. Thus the resulting lower bound holds already for $\chi_H(\R^n;A_m)$, and hence also for $\chi_H(\R^n;m)$. Combining \labelcref{eq:finite-color-lower,eq:monomial-gf}, we obtain
\begin{equation}\label{eq:color-before-composition}
\chi_H(\R^n;m)
\geq
\frac{|S|}{C_H}
\frac{t^{(p-1)q}}{(1+t+\cdots+t^\ell)^n}.
\end{equation}

It remains to compare the chosen prime with $d_{\max}(S)/(m+1)$. Set $x=d_{\max}(S)/(m+1)$. For sufficiently large $x$, the prime-gap theorem of Baker, Harman, and Pintz gives a prime above $x$ of size at most $x+O(x^{0.525})$ \cite{baker2001}. When $x$ is bounded, the additional requirement $p>|V(H)|$ affects the estimate by at most a constant depending on the fixed graph $H$. Consequently,
\[
p\leq
\frac{d_{\max}(S)}{m+1}
+O\!\left(d_{\max}(S)^{0.525}\right)
+O_H(1).
\]
Since $d_{\max}(S)=O(n)$, there is a nonnegative quantity $\eta_n=o(n)$ such that $p-1\leq d_{\max}(S)/(m+1)+\eta_n$. Because $0<t<1$, this implies
\[
t^{(p-1)q}
\geq
t^{q d_{\max}(S)/(m+1)}t^{q\eta_n}.
\]
Since $(C_H^{-1}t^{q\eta_n})^{1/n}=1+o(1)$, substituting this estimate into \labelcref{eq:color-before-composition} proves \labelcref{eq:theta-transfer}.

To prove \labelcref{eq:gamma-transfer}, fix a constant $a>0$ and choose
\[
d_n=\left\lfloor\sqrt{\frac{2an(m+1)}{q}}\right\rfloor
\qquad\text{and}\qquad
\ell=2d_n.
\]
Now let $S$ be the set supplied by \Cref{lem:size_of_S}. This set satisfies the constant norm condition in the hypothesis, and its diameter bound gives $d_{\max}(S)\leq d_n^2/2\leq an(m+1)/q$. Moreover, \Cref{lem:size_of_S} gives $|S|^{1/n}\geq d_n\sqrt{e\pi/(2n)}(1+o(1))=\sqrt{e\pi a(m+1)/q}+o(1)$.

For every fixed $t\in(0,1)$, we have $t^{q d_{\max}(S)/(n(m+1))}\geq t^a$ and
$\frac{1}{1+t+\dots+t^\ell}=\frac{1-t}{1-t^{\ell+1}}\geq1-t$. Since $a$ and $t$ were arbitrary, \labelcref{eq:theta-transfer} therefore gives
\begin{align*}
\chi_H(\R^n;m)
&\geq
\left(
\sqrt{\frac{m+1}{q}}\cdot\sqrt{e\pi}\cdot
\max_{\substack{0<t<1\\a>0}}
t^a(1-t)\sqrt a
+o(1)
\right)^n\\
&=
\left(
\Gamma_{\chi}\sqrt{\frac{m+1}{q}}+o(1)
\right)^n.
\end{align*}
Indeed, writing $t=e^{-x}$, the maximum over $a$ is attained at $a=(2x)^{-1}$, and the last equality is exactly the definition of $\Gamma_{\chi}$. This completes the proof.
\end{proof}

\ghost{{\color{blue}
\begin{proposition}\label{prop:asymptotic}
Fix a finite simple graph $H$ with at least one edge and positive integers $m$ and $q$. Suppose that there is a constant $C_H>0$, depending only on $H$, such that the following holds. For every $\ell,n\geq1$, every constant composition set $S\subseteq\{0,1,\ldots,\ell\}^n$, every prime $p>|V(H)|$ satisfying $d_{\max}(S)<(m+1)p$, and every $H$-free set $B\subseteq S$ in the distance graph with distance set $A_{m,p}$, one has
\[
|B|\leq C_H|M_{\ell,n}((p-1)q)|.
\]
Then, for each fixed $\ell\geq1$,
\begin{equation}\label{eq:theta-transfer}
\chi_H(\R^n;m)\geq
|S| \cdot \left( \max_{0 < t < 1} \frac{t^{\frac{1}{n} \frac{ q \cdot d_\text{max}}{m+1}}}{1 + t + \dots + t^\ell} + o(1)\right)^n.
\end{equation}
Consequently,
\begin{equation}\label{eq:gamma-transfer}
\chi_H(\R^n;m)\geq
\left(
\Gamma_{\chi}\sqrt{\frac{m+1}{q}}+o(1)
\right)^n.
\end{equation}
\end{proposition}

\begin{proof}
Fix $\ell\geq1$ and $t\in(0,1)$. Choose some constant $c > 0$ and  $d_n = \sqrt{\frac{2cn(m+1)}{q}}$ and a $S \subseteq\{0,1,\ldots,\ell\}^n$ so that so that $d_{\text{max}}(S) = \frac{cn(m+1)}{q}$, and let $p$ be the smallest prime satisfying
\[
p>\max\left\{|V(H)|,\frac{d_{\max}(S)}{m+1}\right\}.
\]
This choice ensures that the assumed finite bound applies.

Suppose that $S$ is colored without a monochromatic copy of $H$ whose edge lengths lie in $A_{m,p}$. Every color class is then a $H$-free subset of $S$ and consequently has size at most $C_H|M_{\ell,n}((p-1)q)|$. Hence the number of colors is at least
\begin{equation}\label{eq:finite-color-lower}
\frac{|S|}{C_H|M_{\ell,n}((p-1)q)|}.
\end{equation}

We next estimate the number of monomials in the denominator. If $\alpha\in M_{\ell,n}((p-1)q)$, then $|\alpha|\leq(p-1)q$, and therefore $t^{|\alpha|-(p-1)q}\geq1$. Summing over these vectors and then enlarging the sum to the entire box $\{0,1,\ldots,\ell\}^n$ gives
\begin{equation}\label{eq:monomial-gf}
|M_{\ell,n}((p-1)q)|
\leq
\frac{(1+t+\cdots+t^\ell)^n}{t^{(p-1)q}}.
\end{equation}

Dilating $S$ by $(2p)^{-1/2}$ changes $A_{m,p}$ into the fixed distance set $A_m=\{1,\sqrt2,\ldots,\sqrt m\}$. Thus the resulting lower bound holds already for $\chi_H(\R^n;A_m)$, and hence also for $\chi_H(\R^n;m)$. Combining \labelcref{eq:finite-color-lower,eq:monomial-gf}, we obtain
\begin{equation}\label{eq:color-before-composition}
\chi_H(\R^n;m)
\geq
\frac{|S|}{C_H}
\frac{t^{(p-1)q}}{(1+t+\cdots+t^\ell)^n}.
\end{equation}
It remains to compare the chosen prime with $d_{\max}(S)/(m+1)$. Set $x=d_{\max}(S)/(m+1)$. For sufficiently large $x$, the prime-gap theorem of Baker, Harman, and Pintz gives a prime above $x$ of size at most $x+O(x^{0.525})$ \cite{baker2001}. When $x$ is bounded, the additional requirement $p>|V(H)|$ affects the estimate by at most a constant depending on the fixed graph $H$. Consequently,
\[
p\leq
\frac{d_{\max}(S)}{m+1}
+O\!\left(d_{\max}(S)^{0.525}\right)
+O_H(1).
\]
Since $\ell$ is fixed, we have $d_{\max}(S)=O(n)$. There is therefore a nonnegative quantity $\eta_n=o(n)$ such that $p-1\leq d_{\max}(S)/(m+1)+\eta_n$. Because $0<t<1$, this implies
\[
t^{(p-1)q}
\geq
t^{q d_{\max}(S)/(m+1)}t^{q\eta_n}.
\]

Substituting this estimate into \labelcref{eq:color-before-composition} gives
\[\chi_H(\R^n;m) \geq |S| \cdot \left( \frac{t^{\frac{1}{n} \frac{ q \cdot d_\text{max}}{m+1}}}{1 + t + \dots + t^\ell} + o(1)\right)^n,\]
Since $t$ was arbitrary, choosing a fixed $t$ arbitrarily close to the supremum proves \labelcref{eq:theta-transfer}.

From \Cref{lem:size_of_S}, we have that $|S| \geq  \left(\left(d_n\sqrt{\frac{e\pi}{2n}}\right)(1 + o(1))\right)^n.$
Consequently,
\begin{align*}\chi_H(\R^n;m) &\geq \left(|S|^{1/n} \max_{0 < t < 1} \frac{t^c}{1 + t + \dots + t^\ell} + o\left(|S|^{1/n}\right)\right)^n \\
&\geq \left( \left(d_n\sqrt{e\pi/(2n)}\right) \max_{0 < t < 1} \frac{t^c}{1 + t + \dots + t^\ell} + o(1)\right)^n\\
&\geq \left(\sqrt{\frac{m+1}{q }} \cdot  \sqrt{e\pi} \cdot \max_{\substack{0 < t < 1 \\ c >0}} t^c (1-t)\sqrt{c} + o(1)\right)^n = \left(\sqrt{\frac{m+1}{q }} \cdot \Gamma + o(1)\right)^n.
\end{align*}
\end{proof}
}}
We now assemble the proof of the main theorem.

\begin{proof}[Proof of \Cref{thm:main}]
Apply \Cref{prop:asymptotic} to the flattening rank estimate in \Cref{prop:flattening}. The constant $C_H=h-1$ is independent of $n$, and we obtain
\begin{equation}\label{eq:d2-main-bound}
\chi_H(\R^n;m)
\geq
\left(\Gamma_{\chi}\sqrt{\frac{m+1}{d_2(H)}}+o(1)\right)^n.
\end{equation}

For the partition rank estimate, let $C_1,\ldots,C_s$ be the connected components of $H$, and choose a connected envelope $F$ for which
\[
\lambda(F)=\hlam(H).
\]
Such an envelope exists because the possible values of $\lambda(F)$ are positive integers and the family of envelopes is nonempty. By \Cref{prop:partition,prop:asymptotic},
\[
\chi_F(\R^n;m)
\geq
\left(\Gamma_{\chi}\sqrt{\frac{m+1}{\lambda(F)}}+o(1)\right)^n.
\]
The graph $sF$ contains a copy of $H$: embed $C_i$ into the $i$th copy of $F$. Therefore, for every distance set $A$,
\[
\chi_H(\R^n;A)\geq\chi_{sF}(\R^n;A).
\]
By \Cref{prop:deletion}, the right-hand side equals $\chi_F(\R^n;A)$. Taking the maximum over all $m$-element distance sets gives
\begin{equation}\label{eq:hlam-main-bound}
\chi_H(\R^n;m)
\geq
\left(\Gamma_{\chi}\sqrt{\frac{m+1}{\hlam(H)}}+o(1)\right)^n.
\end{equation}

The better of \labelcref{eq:d2-main-bound,eq:hlam-main-bound} is the estimate with the smaller denominator. By definition, that denominator is
\[
\min\{d_2(H),\hlam(H)\}=\Xi(H).
\]
\end{proof}

\section{The gain from the statistic \texorpdfstring{$\Xi$}{Xi}}
\label{section:the-statistic}

The main purpose of the statistic $\Xi$ is to retain information about the structure of the forbidden graph that is lost under a direct application of Naslund's theorem. Let $H$ have $h$ vertices. Since $H$ is a subgraph of $K_h$, every monochromatic copy of $K_h$ contains a monochromatic copy of $H$. Thus Naslund's theorem for $K_h$ can always be used to obtain a lower bound for $\chi_H(\R^n;m)$. The resulting bound and the graph sensitive bound of \Cref{thm:main} are
\begin{equation}\label{eq:xi-versus-clique}
\begin{aligned}
\chi_H(\R^n;m)
&\geq
\left(\Gamma_{\chi}\sqrt{\frac{m+1}{h-1}}+o(1)\right)^n
&&\text{from the clique reduction},\\
\chi_H(\R^n;m)
&\geq
\left(\Gamma_{\chi}\sqrt{\frac{m+1}{\Xi(H)}}+o(1)\right)^n
&&\text{from \Cref{thm:main}.}
\end{aligned}
\end{equation}
The first estimate treats $H$ as though every pair of its vertices were joined by an edge. The second estimate uses the edge structure that is actually present.

Ignoring the common $o(1)$ term, replacing $h-1$ by $\Xi(H)$ multiplies the exponential base by $\sqrt{(h-1)/\Xi(H)}$. Thus every strict inequality $\Xi(H)<h-1$ improves upon the complete-graph reduction. This difference is particularly important because the improvement occurs inside a quantity raised to the $n$th power. Even a constant-factor improvement in the base becomes an exponential-factor improvement as the Euclidean dimension grows.

For each fixed graph $H$, the asymptotic statement in \Cref{thm:main} concerns $n\to\infty$. In this section we also allow $H$ to range through a family of growing graphs in order to understand how much is gained by using $\Xi(H)$ instead of $|V(H)|-1$. We first establish general bounds on $\Xi$, then compute it for several representative families.

\subsection{Estimating the improvement}

We begin with a monotonicity fact that justifies the definition of $\hlam$ and allows us to compare a connected component with any connected envelope containing it.

\begin{proposition}\label{prop:lambda-monotone}
If a connected graph $G$ is a subgraph of a connected graph $F$, then $\lambda(G)\leq\lambda(F)$. Consequently, if $H$ has connected components $C_1,\ldots,C_s$, then $\hlam(H)\geq\max_{1\leq i\leq s}\lambda(C_i)$. If $H$ is connected, then $\hlam(H)=\lambda(H)$.
\end{proposition}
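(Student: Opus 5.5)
The plan is to prove the monotonicity statement directly, by extending partitions, and then read off the two consequences from the definition of $\hlam$. Identify $G$ with its image in $F$, so that $V(G)\subseteq V(F)$ and $E(G)\subseteq E(F)$; the copy need not be induced, and this does no harm below. If $G=K_1$, then $\lambda(G)=0\leq\lambda(F)$, so assume $|V(G)|\geq2$.

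Fix a partition $\pi=P_1|\cdots|P_r$ of $V(G)$ with $r\geq2$ achieving $\lambda(G)=\delta(G/\pi)$. Extend it to a partition $\sigma$ of $V(F)$ by putting every vertex of $V(F)\setminus V(G)$ into the block $P_1$. The resulting blocks
\[
\sigma=\bigl(P_1\cup(V(F)\setminus V(G))\bigr)\,|\,P_2\,|\cdots|\,P_r
\]
are still $r\geq2$ nonempty blocks, so $\sigma$ is admissible in the definition of $\lambda(F)$.

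The key step is to check that $F/\sigma$ contains $G/\pi$ as a spanning subgraph under the obvious identification of blocks. Suppose $P_i$ and $P_j$ ($i\neq j$) are adjacent in $G/\pi$. Then some edge $uv\in E(G)$ has $u\in P_i$ and $v\in P_j$. This edge lies in $E(F)$, and its endpoints lie in the corresponding distinct blocks of $\sigma$, because enlarging $P_1$ only adds vertices. Hence the same two blocks are adjacent in $F/\sigma$.

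Adding edges cannot lower the degree of any vertex, so
\[
\lambda(F)\geq\delta(F/\sigma)\geq\delta(G/\pi)=\lambda(G).
\]
Connectivity is not actually used here. I expect no real obstacle: the only points needing care are that $\sigma$ keeps at least two blocks, which holds because the number of blocks is unchanged, and that the quotient is simple. The latter is harmless because only adjacency, not multiplicity, matters for degrees.

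For the consequences, let $F$ be any connected envelope of $H$. Each component $C_i$ is a connected subgraph of $F$, so the first part gives $\lambda(F)\geq\lambda(C_i)$ for every $i$. Therefore $\lambda(F)\geq\max_i\lambda(C_i)$, and taking the minimum over all envelopes $F$ yields $\hlam(H)\geq\max_i\lambda(C_i)$.

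If $H$ is connected, then $H$ is its own unique component, so this bound reads $\hlam(H)\geq\lambda(H)$. Conversely, $H$ itself is a connected envelope of $H$, so $\hlam(H)\leq\lambda(H)$. Together these give $\hlam(H)=\lambda(H)$.
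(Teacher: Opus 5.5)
Your proposal is correct and follows essentially the same route as the paper. You extend a partition of $V(G)$ by absorbing the extra vertices of $F$ into an existing block, observe that $G/\pi$ becomes a spanning subgraph of the new quotient, and then derive both consequences from the definition of $\hlam$ together with the observation that $H$ is its own envelope when it is connected.
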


\begin{proof}
If $G=K_1$, the assertion is immediate, so suppose that $|V(G)|\geq2$. Fix a partition $\pi$ of $V(G)$ into at least two blocks. Extend it to a partition $\widetilde\pi$ of $V(F)$ by placing every vertex of $F\setminus G$ into any existing block. The quotient $G/\pi$ is a spanning subgraph of $F/\widetilde\pi$: every adjacency produced by an edge of $G$ is still present, and extra edges of $F$ can only add adjacencies. Hence $\delta(G/\pi)\leq\delta(F/\widetilde\pi)\leq\lambda(F)$. Maximizing over $\pi$ gives $\lambda(G)\leq\lambda(F)$.

Every connected envelope of $H$ contains each $C_i$, so the first consequence follows by minimizing over envelopes. If $H$ is connected, then $H$ itself is an admissible envelope, giving $\hlam(H)\leq\lambda(H)$. Monotonicity gives the reverse inequality for every connected envelope.
\end{proof}

The next estimate shows that a graph with relatively few edges cannot have a large value of $\Xi$. This already gives a broad class of graphs for which \Cref{thm:main} improves upon the denominator $h-1$.

\begin{proposition}\label{prop:edge-bound}
If a graph $G$ has $e\geq1$ edges, then
\[
\lambda(G)\leq
\left\lfloor\frac{\sqrt{8e+1}-1}{2}\right\rfloor.
\]
If $H$ has $e\geq1$ edges, then $\Xi(H)\leq\hlam(H)<2\sqrt e$. In particular, $\Xi(H)=O(\sqrt{|E(H)|})$.
\end{proposition}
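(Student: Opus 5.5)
For the first claim, fix a partition $\pi$ of $V(G)$ with $r=|\pi|\geq 2$ blocks and put $\delta=\delta(G/\pi)$. The plan is to count edges of the quotient. Every edge of $G/\pi$ is witnessed by at least one edge of $G$ joining the two blocks, and distinct quotient edges are witnessed by distinct edges of $G$. Hence $|E(G/\pi)|\leq e$.

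The quotient is simple, so $\delta\leq r-1$. Summing degrees also gives $r\delta\leq 2|E(G/\pi)|\leq 2e$. Combining these yields $\delta(\delta+1)\leq r\delta\leq 2e$. Solving the quadratic $\delta^2+\delta-2e\leq 0$ gives $\delta\leq(\sqrt{8e+1}-1)/2$, and since $\delta$ is an integer we may take the floor. Maximizing over $\pi$ then gives the bound on $\lambda(G)$.

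For the second claim, $\Xi(H)\leq\hlam(H)$ holds by the definition \labelcref{eq:def-xi}. The real work is bounding $\hlam(H)$ when $H$ is disconnected, because $\hlam$ is a minimum over connected envelopes. For this I would choose an envelope whose edge count is essentially $e$. Let the components with at least one edge be $C_1,\ldots,C_t$, so $t\geq1$. Isolated-vertex components $K_1$ embed into any vertex of any envelope and impose no constraint.

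The envelope $F$ is built by identifying one chosen vertex $v_i$ of each $C_i$ into a single common vertex, which glues the components at a point. Then $F$ is connected, contains every component, and has exactly $e$ edges. By the first part, $\hlam(H)\leq\lambda(F)\leq(\sqrt{8e+1}-1)/2$. Because $\sqrt{8e+1}<\sqrt{8e}+1$, this is strictly less than $\sqrt{2e}<2\sqrt e$.

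The only delicate point is the disconnected case. It must be checked that the identification keeps $F$ a simple graph containing each $C_i$ as a subgraph. This holds because different components share only the glued vertex, so no multiple edges arise. The $O(\sqrt{|E(H)|})$ statement then follows immediately.
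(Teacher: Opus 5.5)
Your proof is correct. The bound on $\lambda(G)$ is the same counting argument as in the paper: the quotient has at least $\delta+1$ vertices and at most $e$ edges, so $\delta(\delta+1)/2\le e$.

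You differ from the paper in the choice of envelope for disconnected $H$. The paper keeps the $r$ nontrivial components vertex-disjoint and adds $r-1$ connecting edges. That envelope has at most $e+r-1\le 2e-1$ edges, which requires the auxiliary count $r\le e$ and gives only $\lambda(F)\le(\sqrt{16e-7}-1)/2<2\sqrt e$.

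You instead glue all nontrivial components at a single common vertex. The definition of a connected envelope explicitly permits this, since the embeddings of different components need not be disjoint. Your one-point union is simple, connected, contains each component, and has exactly $e$ edges. It therefore gives the sharper estimate $\hlam(H)\le\lfloor(\sqrt{8e+1}-1)/2\rfloor<\sqrt{2e}$, the same bound as in the connected case, and it avoids the step $r\le e$. The paper's construction is cruder but still suffices for the stated $2\sqrt e$.
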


\begin{proof}
Let $Q=G/\pi$ and put $d=\delta(Q)$. The quotient has at least $d+1$ vertices, so $|E(Q)|\geq d(d+1)/2$. Passing to a simple quotient cannot increase the number of edges, and therefore $d(d+1)/2\leq |E(Q)|\leq e$. Solving this quadratic inequality for $d$ and then maximizing over $\pi$ proves the first assertion.

For the second assertion, discard the isolated components of $H$ when constructing an envelope. This causes no problem because a single vertex is already a subgraph of every nonempty connected graph. Suppose the remaining nontrivial components number $r$. Since each such component contains an edge, we have $r\leq e$. Take disjoint copies of these components and add $r-1$ edges to connect them. The resulting envelope $F$ has at most $e+r-1\leq2e-1$ edges. The first part gives $\hlam(H)\leq\lambda(F)<2\sqrt e$, and $\Xi(H)\leq\hlam(H)$ by definition.
\end{proof}

Suppose that $H_h$ is a graph on $h$ vertices with at least one edge and $|E(H_h)|=o(h^2)$. The preceding proposition gives $\Xi(H_h)=o(h)$, so the gain $\sqrt{(h-1)/\Xi(H_h)}$ over the clique reduction tends to infinity. In particular, if $|E(H_h)|=O(h)$, then the proposition alone gives $\Xi(H_h)=O(\sqrt h)$ and hence a gain of at least order $h^{1/4}$ in the exponential base. The second largest degree term in the definition of $\Xi$ can produce an even larger improvement.

\subsection{Representative graph families}

Recall that the join $G_1+G_2$ of disjoint graphs is obtained from $G_1\sqcup G_2$ by adding every edge between $V(G_1)$ and $V(G_2)$. For $1\leq s<h$, put $J_{s,h}=K_s+\overline{K_{h-s}}$. This is a split graph consisting of a clique of $s$ universal vertices and an independent set of $h-s$ vertices.

The final column of the following table records the factor by which the leading exponential base in \Cref{thm:main} improves upon the base obtained from the clique reduction.

\begin{proposition}\label{prop:family-table}
For the indicated ranges, the following values hold.
\begin{center}
\small
\begin{tabular}{lcccc}
\toprule
Graph family & $\hlam$ & $d_2$ & $\Xi$ & Gain in base\\
\midrule
$K_h$, $h\geq2$ & $h-1$ & $h-1$ & $h-1$ & $1$\\
$P_h$, $h\geq4$ & $\Theta(\sqrt h)$ & $2$ & $2$ & $\sqrt{(h-1)/2}$\\
$C_h$, $h\geq3$ & $\Theta(\sqrt h)$ & $2$ & $2$ & $\sqrt{(h-1)/2}$\\
$K_{1,h-1}$, $h\geq2$ & $1$ & $1$ & $1$ & $\sqrt{h-1}$\\
$rK_2$, $r\geq1$ & $1$ & $1$ & $1$ & $\sqrt{2r-1}$\\
$J_{s,h}$, $2\leq s<h$ & $s$ & $h-1$ & $s$ & $\sqrt{(h-1)/s}$\\
\bottomrule
\end{tabular}
\end{center}
\end{proposition}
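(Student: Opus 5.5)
The plan is to verify the table row by row. Every family except $rK_2$ is connected, so by \Cref{prop:lambda-monotone} it suffices to compute $\lambda$ in place of $\hlam$. The $d_2$ column comes directly from the degree sequences. $K_h$ has all degrees $h-1$. $P_h$ with $h\geq4$ has at least two internal vertices of degree $2$; this is why the range is $h\geq4$, since $d_2(P_3)=1$. $C_h$ is $2$-regular. $K_{1,h-1}$ has $d_2=1$, and $rK_2$ is $1$-regular. $J_{s,h}$ with $s\geq2$ has at least two universal vertices of degree $h-1$. Once $\hlam$ and $d_2$ are known, $\Xi$ is their minimum and the gain is $\sqrt{(h-1)/\Xi}$, with $h=2r$ for matchings.

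The easy $\lambda$ computations go as follows.
\begin{itemize}
\item For $K_h$, every quotient by a partition into $r$ blocks is $K_r$. The singleton partition therefore gives $\lambda=h-1$.
\item For the star, in any quotient the block containing the center is adjacent to all other blocks, and every other block is adjacent only to it. The quotient is thus a star and $\lambda=1$.
\item For $rK_2$, take $F=K_2$ as the connected envelope; clearly $\lambda(K_2)=1$. Since $\hlam\geq1$ always holds, this gives $\hlam=1$.
\item For $J_{s,h}$, the lower bound comes from the partition into the $s$ clique singletons plus one block containing all the independent vertices; its quotient is $K_{s+1}$, with minimum degree $s$. For the upper bound, take any partition. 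If some block consists only of independent vertices, its neighbors in the quotient are blocks containing clique vertices, of which there are at most $s$. Otherwise every block contains a clique vertex, so there are at most $s$ blocks and the minimum degree is at most $s-1$. Hence $\lambda=s$.
\end{itemize}

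The main work is the $\Theta(\sqrt h)$ claim for paths and cycles. The upper bound follows immediately from \Cref{prop:edge-bound}, since $e\leq h$ gives $\lambda<2\sqrt h$.

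For the lower bound, I will generalize \Cref{ex:p4-quotient}. Choose the largest odd $q$ with $q(q-1)/2\leq h-1$, so that $q=\Theta(\sqrt h)$. Since $K_q$ is Eulerian, fix an Eulerian circuit $w_0w_1\cdots w_N$ with $w_N=w_0$ and $N=q(q-1)/2$. Then partition the vertices of the path by label: path vertex $i$ goes to the block $w_i$ for $i\leq N$, and every leftover vertex goes to the same block as its predecessor. Edges inside a block are harmless because they collapse in the quotient, so the quotient contains every edge of $K_q$ and equals $K_q$. This gives $\lambda(P_h)\geq q-1$.

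For $C_h$ the same idea works. Walk the Eulerian circuit along the first $N+1$ cycle vertices, and put all remaining vertices into the block $w_0$. The edges that close up the cycle then lie inside block $w_0$ or join $w_N=w_0$ to itself. For small $h$ (for instance $C_3$, where $q=3$ fits exactly), one checks directly that $\lambda\geq1$ and $\lambda\leq2$, which is consistent with $\Theta(\sqrt h)$.

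I expect the main obstacle to be making the padding argument fully rigorous: confirming that the partition has at least two blocks, and that no spurious issue arises from the closing edge of the cycle. Since quotient edges only require at least one crossing edge and internal edges are ignored, I expect this to be routine.
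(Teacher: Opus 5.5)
Your proposal is correct and follows essentially the same route as the paper: a row-by-row check, with the edge-count bound of \Cref{prop:edge-bound} giving $O(\sqrt h)$, a padded Euler circuit of an odd clique giving $\Omega(\sqrt h)$, and the same core-block argument for $J_{s,h}$ (your $K_{s+1}$ partition in place of the paper's discrete partition is an inessential variation). One small tightening: the $\Xi$ column needs $\lambda\geq 2$ for every $h$ in range, not just $\lambda\geq1$ for small cycles. This is immediate from the discrete partition for the $2$-regular $C_h$, and from your construction with $q\geq3$ for $P_h$, $h\geq4$. The paper also sidesteps your separate $C_3$ check by letting the circuit have $N\leq h$ edges for cycles, reusing the first cycle vertex for $w_N=w_0$.
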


\begin{proof}
For $K_h$, every quotient is a complete graph on at most $h$ vertices, and the discrete partition gives minimum degree $h-1$. Thus $\lambda(K_h)=h-1$. The graph is connected and regular, so $\hlam(K_h)=d_2(K_h)=\Xi(K_h)=h-1$. For paths and cycles, \Cref{prop:edge-bound} gives $\lambda=O(\sqrt h)$. For the matching lower bound, choose the largest odd integer $q=2k+1$ such that $|E(K_q)|=k(2k+1)\leq h-1$ in the path case, or at most $h$ in the cycle case. Every vertex of $K_{2k+1}$ has even degree, so this complete graph has an Euler circuit. Reading the vertices along the circuit gives a walk whose support contains every edge of $K_{2k+1}$. If the path or cycle has extra steps, pad the walk by staying at one vertex. Such a stationary step corresponds to an edge whose endpoints lie in the same quotient block, so it disappears in the simple quotient. This constructs a quotient isomorphic to $K_{2k+1}$ and proves that $\lambda(P_h),\lambda(C_h)=\Omega(\sqrt h)$. Both graphs are connected. For $P_h$ with $h\geq4$, the second largest degree is two, and every cycle is $2$-regular. Hence $\Xi(P_h)=\Xi(C_h)=2$. Every quotient of a star is again a star or a single edge. The block containing the center is adjacent to every block containing only leaves, while two leaf-only blocks are not adjacent. Hence $\lambda(K_{1,h-1})=1$. Its second largest degree is also one, so $\Xi(K_{1,h-1})=1$. For the matching $rK_2$, the connected graph $K_2$ is an envelope for every component, so $\hlam(rK_2)\leq1$. Every connected graph with an edge has quotient parameter at least one, giving equality. All nonisolated degrees in the matching are one, and therefore $d_2(rK_2)=\Xi(rK_2)=1$. Finally, consider $J_{s,h}$ with $2\leq s<h$. Its discrete quotient has minimum degree $s$, so $\lambda(J_{s,h})\geq s$. In an arbitrary quotient, call a block a core block if it contains at least one of the $s$ clique vertices. There are at most $s$ core blocks. Every block containing only independent-set vertices is adjacent exactly to the core blocks. If such a block exists, the quotient minimum degree is at most $s$. If no such block exists, every block is a core block and the quotient is a clique on at most $s$ vertices, whose minimum degree is at most $s-1$. Thus $\lambda(J_{s,h})=s$. The graph is connected, so $\hlam(J_{s,h})=s$. Since the $s$ clique vertices are universal and $s\geq2$, we have $d_2(J_{s,h})=h-1$. Therefore $\Xi(J_{s,h})=s$.
\end{proof}

Equality with the clique denominator occurs precisely in the complete-graph cases represented in the table. For a path or cycle, the clique reduction pays $h-1$, while \Cref{thm:main} pays only $2$. The graph sensitive base is therefore larger by a factor of $\sqrt{(h-1)/2}$. In particular, the denominator in the new bound remains fixed even as the path or cycle grows. Stars give an even cleaner comparison. Naslund's clique reduction pays $h-1$, but $\Xi(K_{1,h-1})=1$, so the gain in the exponential base is $\sqrt{h-1}$. This happens because only the center has large degree. The flattening argument omits that vertex and pays the second largest degree, which is one. The split graphs display the complementary partition rank mechanism. When $s$ is fixed, the graph $J_{s,h}$ has at least two universal vertices and therefore $d_2(J_{s,h})=h-1$. The flattening estimate alone gives no improvement over the clique denominator. Nevertheless, the quotient parameter satisfies $\hlam(J_{s,h})=s$, so $\Xi(J_{s,h})=s$. Partition rank therefore replaces the growing denominator $h-1$ by the fixed number $s$, producing a gain of order $\sqrt h$ in the exponential base.

\subsection{When \texorpdfstring{$\Xi$}{Xi} stays constant}

The largest separation from the clique reduction occurs when the order of the graph tends to infinity while $\Xi$ remains bounded. In that situation, the denominator in \Cref{thm:main} stays constant, while the clique denominator $h-1$ tends to infinity. The gain in the exponential base is therefore of order $\sqrt h$.

There are two independent ways for a family to have bounded $\Xi$. The flattening mechanism gives the immediate inequality $\Xi(H)\leq d_2(H)$. Thus bounded maximum degree, and more generally bounded second largest degree, implies bounded $\Xi$. This includes paths, cycles, bounded-degree trees, fixed-degree regular expanders, and subdivisions of any fixed graph.

The partition mechanism can keep $\Xi$ bounded even when $d_2$ grows linearly. The split graphs $J_{s,h}$ with fixed $s$ are the cleanest example: when $s\geq2$, they have two universal vertices and hence $d_2$ is $h-1$, but $\Xi$ is $s$. Repeated connected components provide another exact constant family.

\begin{proposition}\label{prop:constant-families}
Let $F$ be a fixed connected graph with at least one edge.
\begin{enumerate}[label=\textup{(\roman*)}]
\item For every $r\geq1$, one has $\hlam(rF)=\lambda(F)$. For $r\geq2$, one has $\Xi(rF)=\min\{\lambda(F),\Delta(F)\}$, which is independent of $r$.
\item If $H$ is a $d$-regular graph with $d\geq1$, not necessarily connected, then $\Xi(H)=d$.
\end{enumerate}
\end{proposition}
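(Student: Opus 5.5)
For part (i), I would get $\hlam(rF)$ by sandwiching it between two bounds. Every connected component of $rF$ is a copy of $F$, and $F$ is connected. So $F$ itself is a connected envelope of $rF$, since each component embeds into it (the embeddings need not be disjoint). This gives $\hlam(rF)\leq\lambda(F)$. For the reverse inequality I would invoke \Cref{prop:lambda-monotone}, which gives $\hlam(rF)\geq\max_i\lambda(C_i)=\lambda(F)$. Together these give $\hlam(rF)=\lambda(F)$ for every $r\geq1$.

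Next I compute $d_2(rF)$ for $r\geq2$. The degree multiset of $rF$ is $r$ copies of that of $F$, so the maximum degree $\Delta(F)$ occurs at least twice. Hence $d_1(rF)=d_2(rF)=\Delta(F)$, and by definition \labelcref{eq:def-xi} we get $\Xi(rF)=\min\{\lambda(F),\Delta(F)\}$, which does not depend on $r$. I would add a brief remark on why $r\geq2$ is needed: when $r=1$, $d_2(F)$ may be strictly smaller than $\Delta(F)$, for instance for a star.

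For part (ii), let $H$ be $d$-regular with $d\geq1$. Every vertex has degree $d$ and $H$ has at least $d+1\geq2$ vertices, so $d_2(H)=d$ and therefore $\Xi(H)\leq d$. For the reverse inequality it suffices to show $\hlam(H)\geq d$. I would use \Cref{prop:lambda-monotone} again: $\hlam(H)\geq\lambda(C)$ for any component $C$. Since $C$ is $d$-regular, it has at least $d+1\geq2$ vertices, so the discrete partition of $V(C)$ has at least two blocks and is admissible in the definition of $\lambda$. Its quotient is $C$ itself (already simple), so $\lambda(C)\geq\delta(C)=d$. Hence $\Xi(H)=\min\{\hlam(H),d\}=d$.

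There is no serious obstacle here. The only points needing care are the admissibility conditions: the discrete partition must have at least two blocks, which holds because $d\geq1$, and $rF$ must have a repeated maximum degree for the $d_2$ computation, which holds because $r\geq2$. Beyond that, the argument only needs the correct direction of \Cref{prop:lambda-monotone}, namely that it bounds $\hlam$ from below.
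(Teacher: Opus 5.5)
Your proof is correct and follows the paper's argument essentially step for step. You use $F$ itself as a connected envelope together with \Cref{prop:lambda-monotone} for the two inequalities on $\hlam(rF)$, the repeated maximum degree to get $d_2(rF)=\Delta(F)$ when $r\geq2$, and, in the regular case, the discrete quotient of a component combined with monotonicity, all exactly as the paper does.
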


\begin{proof}
The graph $F$ itself is a connected envelope for $rF$, so $\hlam(rF)\leq\lambda(F)$. Conversely, every connected envelope of $rF$ contains a copy of $F$, and \Cref{prop:lambda-monotone} gives the reverse inequality. Thus $\hlam(rF)=\lambda(F)$. If $r\geq2$, two different copies of $F$ contain vertices of degree $\Delta(F)$, so the second largest degree of $rF$ is $\Delta(F)$. This proves part (i). If $H$ is $d$-regular, then $d_2(H)=d$. Every connected component $C_i$ has minimum degree $d$, so its discrete quotient shows that $\lambda(C_i)\geq d$. By \Cref{prop:lambda-monotone}, $\hlam(H)\geq d$. It follows that $d\leq\Xi(H)\leq d_2(H)=d$, proving part (ii).
\end{proof}

Let $f=|V(F)|$ and let $c_F=\min\{\lambda(F),\Delta(F)\}$. The graph $rF$ has $rf$ vertices, but $\Xi(rF)=c_F$ for every $r\geq2$. The clique reduction therefore pays $rf-1$, while the graph sensitive theorem pays the fixed number $c_F$. The gain in the exponential base is $\sqrt{(rf-1)/c_F}=\Theta(\sqrt r)$. The regular-graph statement gives another useful family. If $H_h$ is a $d$-regular graph on $h$ vertices and $d$ remains fixed, then $\Xi(H_h)=d$, so the gain over the clique reduction is $\sqrt{(h-1)/d}=\Theta(\sqrt h)$. In particular, fixed-degree regular expanders have constant $\Xi$ despite their strong global connectivity. When the regular degree grows with the graph, the gain may be smaller but still nontrivial. For example, $\Xi(K_{r,r})=r$ while $|V(K_{r,r})|=2r$, so the gain over the clique reduction tends to $\sqrt2$. At the opposite endpoint, complete graphs satisfy $\Xi(K_h)=h-1$, and no improvement is possible because the original graph is already the clique used in Naslund's theorem.

\subsection{Intermediate scales}

The improvement supplied by $\Xi$ is not restricted to the two extremes of a bounded denominator and the clique denominator $h-1$. The split graphs realize every intermediate order of growth. If $s=s(h)$ satisfies $1\leq s(h)<h$, then $\Xi(J_{s(h),h})=s(h)$. The clique reduction pays $h-1$, while the graph sensitive theorem pays $s(h)$, so the gain in the exponential base is $\sqrt{(h-1)/s(h)}$.

For example, choosing $s(h)=\Theta(\log h)$ gives a gain of order $\sqrt{h/\log h}$. Choosing $s(h)=\Theta(h^{1/3})$ gives a gain of order $h^{1/3}$, while choosing $s(h)=\Theta(\sqrt h)$ gives a gain of order $h^{1/4}$. Thus $\Xi$ can produce a substantial improvement even when it is not bounded. More generally, suppose that $\Xi(H_h)=\Theta(h^\alpha)$ for some $0<\alpha<1$. The clique reduction contributes a factor of order $h^{-1/2}$ to the exponential base, whereas \Cref{thm:main} contributes a factor of order $h^{-\alpha/2}$. Their ratio is $h^{(1-\alpha)/2}$, which tends to infinity.

The role of $\Xi(H)$ is therefore quantitative rather than merely notational. The denominator $h-1$ treats every graph on $h$ vertices as though every pair of vertices were constrained. The denominator $\Xi(H)$ records the cheapest separation made available by the actual edges of $H$. Depending on the graph family, this can improve the exponential base by a constant factor, by a power of $h$, or by the full order $\sqrt h$ that occurs when $\Xi$ remains bounded.

\section{Concluding remarks}\label{section:conclusion}

Theorem~\ref{thm:main} should be viewed as a graph sensitive extension of Naslund's multiple distance theorem. The new issue is that an arbitrary graph has two distinct kinds of inexpensive variable separation. Equality patterns turn the graph into simple quotients and lead to $\hlam(H)$, while semi-diagonal weighted flattening rank permits the single most expensive vertex to be discarded and leads to $d_2(H)$. This begs the question: are there ways to optimize this further by changing other parts of Naslund's work? This includes changing the congruence detector, the lattice-sphere construction, and the final constant $\Gamma_{\chi}$. 

The quotient parameter also records a genuine limitation of the direct partition rank tensor. A graph with few edges can have a dense quotient because a long walk may trace every edge of a much denser graph. Paths and cycles make this visible. Any improvement that reduces the quotient cost while using the same distinctness-indicator expansion would have to exploit cancellation between different partition lattice terms, rather than bounding each selected block separately. The partition-indicator framework of \cite{omar2025partition} may be useful for organizing such cancellations.

The weighted flattening theorem is independent of Euclidean geometry. Its corollary
\[
\sum_{i\neq s}\frank_i(T)\geq|B|
\]
shows that a semi-diagonal tensor remains strongly constrained even after any one flattening rank is removed. In applications where one variable has an unusually large polynomial degree, this can replace a maximum degree parameter by a second largest degree parameter. It would be interesting to identify other extremal problems in which that distinction improves bounds.

Finally, there are also geometric limitations. The main theorem concerns noninduced graph copies. For an induced copy, every nonedge would impose an inequality saying that a distance must not belong to the allowed set. A naive polynomial factor for such an inequality vanishes on the full diagonal and destroys the diagonal rank lower bound. The induced graph-copy problems studied by Axenovich, Liu, and Sagdeev therefore require information not present in the tensor used here \cite{axenovich2025ramsey}.

\section*{Acknowledgments}

The authors thank the Fields Institute FUSRP for support throughout the project. Mohamed Omar was partially supported by research funds from York University and NSERC Discovery Grant \#RGPIN-2025-06304.


\begin{thebibliography}{99}

\bibitem{axenovich2025ramsey}
M.~Axenovich, D.~Liu, and A.~Sagdeev,
\newblock Ramsey problems for graphs in Euclidean spaces and Cartesian powers,
\newblock \emph{arXiv preprint} arXiv:2512.15516, 2025.

\bibitem{baker2001}
R.~C. Baker, G.~Harman, and J.~Pintz,
\newblock The difference between consecutive primes. II,
\newblock \emph{Proc. London Math. Soc.} \textbf{83} (2001), 532-562.

\bibitem{capset}
J.~S. Ellenberg and D.~Gijswijt,
\newblock On large subsets of $\mathbb F_q^n$ with no three-term arithmetic progression,
\newblock \emph{Ann. of Math.} \textbf{185} (2017), 339-343.

\bibitem{correia2021flattening}
D.~Munh\'a Correia, B.~Sudakov, and I.~Tomon,
\newblock Flattening rank and its combinatorial applications,
\newblock \emph{Linear Algebra Appl.} \textbf{625} (2021), 113-125.

\bibitem{crootlevpach}
E.~Croot, V.~F. Lev, and P.~P. Pach,
\newblock Progression-free sets in $\mathbb Z_4^n$ are exponentially small,
\newblock \emph{Ann. of Math.} \textbf{185} (2017), 331-337.

\bibitem{debruijnerdos1951}
N.~G. de Bruijn and P.~Erd\H{o}s,
\newblock A colour problem for infinite graphs and a problem in the theory of relations,
\newblock \emph{Nederl. Akad. Wetensch. Proc. Ser. A} \textbf{54} (1951), 371-373.

\bibitem{degrey2018}
A.~D.~N.~J. de Grey,
\newblock The chromatic number of the plane is at least $5$,
\newblock \emph{Geombinatorics} \textbf{28} (2018), 18-31.

\bibitem{erdos1973i}
P.~Erd\H{o}s, R.~L. Graham, P.~Montgomery, B.~L. Rothschild, J.~Spencer, and E.~G. Straus,
\newblock Euclidean Ramsey theorems. I,
\newblock \emph{J. Combin. Theory Ser. A} \textbf{14} (1973), 341-363.

\bibitem{erdos1975ii}
P.~Erd\H{o}s, R.~L. Graham, P.~Montgomery, B.~L. Rothschild, J.~Spencer, and E.~G. Straus,
\newblock Euclidean Ramsey theorems. II,
\newblock in \emph{Infinite and Finite Sets}, Colloq. Math. Soc. J\'anos Bolyai, vol.~10,
North-Holland, 1975, pp.~529-557.

\bibitem{erdos1975iii}
P.~Erd\H{o}s, R.~L. Graham, P.~Montgomery, B.~L. Rothschild, J.~Spencer, and E.~G. Straus,
\newblock Euclidean Ramsey theorems. III,
\newblock in \emph{Infinite and Finite Sets}, Colloq. Math. Soc. J\'anos Bolyai, vol.~10,
North-Holland, 1975, pp.~559-583.

\bibitem{exoo2020}
G.~Exoo and D.~Ismailescu,
\newblock The chromatic number of the plane is at least $5$: a new proof,
\newblock \emph{Discrete Comput. Geom.} \textbf{64} (2020), 216-226.

\bibitem{franklrod1990}
P.~Frankl and V.~R\"odl,
\newblock A partition property of simplices in Euclidean space,
\newblock \emph{J. Amer. Math. Soc.} \textbf{3} (1990), 1-7.

\bibitem{franklwilson1981}
P.~Frankl and R.~M. Wilson,
\newblock Intersection theorems with geometric consequences,
\newblock \emph{Combinatorica} \textbf{1} (1981), 357-368.

\bibitem{graham2004}
R.~L. Graham,
\newblock Euclidean Ramsey theory,
\newblock in \emph{Handbook of Discrete and Computational Geometry}, second ed.,
CRC Press, Boca Raton, FL, 2004.

\bibitem{kupavskii2010chromatic}
A.~B. Kupavskii,
\newblock The chromatic number of the space $\mathbb R^n$ with a set of forbidden distances,
\newblock \emph{Dokl. Math.} \textbf{82} (2010), 963-966.

\bibitem{larmanrogers1972}
D.~G. Larman and C.~A. Rogers,
\newblock The realization of distances within sets in Euclidean space,
\newblock \emph{Mathematika} \textbf{19} (1972), 1-24.

\bibitem{naslund2020partition}
E.~Naslund,
\newblock The partition rank of a tensor and $k$-right corners in $\mathbb F_q^n$,
\newblock \emph{J. Combin. Theory Ser. A} \textbf{174} (2020), 105190.

\bibitem{naslund2022chromatic}
E.~Naslund,
\newblock The chromatic number of $\mathbb R^n$ with multiple forbidden distances,
\newblock \emph{Mathematika} \textbf{69} (2023), 692-718.

\bibitem{omar2025partition}
M.~Omar,
\newblock Partition rank and partition lattices,
\newblock \emph{Order} \textbf{42} (2025), 371-388.

\bibitem{raigorodskii2000}
A.~M. Raigorodskii,
\newblock On the chromatic number of a space with forbidden distances,
\newblock \emph{Russian Math. Surveys} \textbf{55} (2000), 351-352.

\bibitem{soifer2009}
A.~Soifer,
\newblock \emph{The Mathematical Coloring Book: Mathematics of Coloring and the Colorful Life of Its Creators},
\newblock Springer, New York, 2009.

\bibitem{stanley2012}
R.~P. Stanley,
\newblock \emph{Enumerative Combinatorics}, vol.~1, second ed.,
\newblock Cambridge University Press, Cambridge, 2012.

\bibitem{tao2016capset}
T.~Tao,
\newblock A symmetric formulation of the Croot-Lev-Pach-Ellenberg-Gijswijt capset bound,
\newblock blog post, 2016,
\newblock \href{https://terrytao.wordpress.com/2016/05/18/a-symmetric-formulation-of-the-croot-lev-pach-ellenberg-gijswijt-capset-bound/}{terrytao.wordpress.com}.

\end{thebibliography}
\end{document}